\documentclass[12pt,reqno]{amsart}

\usepackage{amsmath}
\usepackage{amsthm}
\usepackage{amssymb}
\usepackage{mathrsfs}
\usepackage{enumitem}
\usepackage{tikz}

%______________________________________
\setlength{\textheight}{23.0cm}
\setlength{\textwidth}{15.75cm}
\setlength{\topmargin}{-5mm}
\setlength{\oddsidemargin}{5mm}
\setlength{\evensidemargin}{5mm}
%______________________________________

\numberwithin{equation}{section}
\frenchspacing

%%%%%%%%%%%%%%%%%%%%%%%%%%%%%%%%%%%%%%%%%%%
%           Theorems                      %
%%%%%%%%%%%%%%%%%%%%%%%%%%%%%%%%%%%%%%%%%%%
\newtheorem{theo}{Theorem}
\numberwithin{theo}{section}
\newtheorem{prop}[theo]{Proposition}

\newtheorem{coro}[theo]{Corollary}
\newtheorem{lemma}[theo]{Lemma}
\newtheorem{claim}[theo]{Claim}

\theoremstyle{definition}
\newtheorem{defi}[theo]{Definition}

\numberwithin{notation}{section}

%------------------------------------------------------
%               Commands Damjan
%------------------------------------------------------

\newcommand{\F}{\mathcal{F}}
\newcommand{\s}{\subseteq}

\newcommand{\w}{\omega}
\newcommand{\al}{\alpha}

\newcommand{\be}{\beta}

%-------------------------------------------------------
%                    Commands Fulgencio
%-------------------------------------------------------

\newcommand{\calF}{\mathcal F}

\newcommand{\PP}{\mathbb P}

\newcommand{\CC}{\mathbb{C}}
\newcommand{\KK}{\mathbb K}

\newcommand{\bff}{\mathbf F}

\DeclareMathOperator{\supp}{supp}

\title{Forcing and Construction Schemes}
\author{Damjan Kalajdzievski}
\author{Fulgencio Lopez}
% Damjan address     \address{}
% Damjan email       \email{}
\address{Department of Mathematics and Statistics\\ York University \\ N520 Ross
4700 Keele Street\\ Toronto, ON M3J 1P3, Canada.}
\email{dkala011@mathstat.yorku.ca}
\address{Department of Mathematics\\ University of Toronto\\ Bahen Center 40 St. George St.\\ Toronto, Ontario M5S 2E4, Canada.}
\email{fulgencio.lopez@mail.utoronto.ca}

\subjclass[2010]{03E05, 03E35, 03E65.}
\keywords{Construction schemes, Knaster Hierarchy, Cohen reals.}

\begin{document}

\begin{abstract}
We investigate forcing and independence questions relating to construction schemes.
We show that adding $\kappa\geq\omega_1$ Cohen reals adds a capturing construction scheme.
We study the weaker structure of $n$-capturing construction schemes and 
show that it is consistent to have $n$-capturing construction schemes but no $(n+1)$-capturing 
construction schemes.
We also study the relation of $n$-capturing with the $m$-Knaster hierarchy
and show that  MA$_{\omega_1}($K$_m)$ and $n$-capturing are independent if $n\leq m$ and incompatible if $n>m$.
\end{abstract}

\maketitle

\section{Introduction and Preliminaries}
Following some forcing construction of \cite{BGT} and \cite{LAT}, Todor\v{c}evi\'{c}~\cite{todor} 
introduced the concept of capturing construction schemes and showed they exist
if you assume $\diamondsuit$. Also in \cite{todor}, capturing construction schemes are used to construct a Banach space of the form $C(K)$ without biorthogonal sequences and other objects from \cite{LAT}, but without forcing.
These results are characterized for the recursive nature of the proofs which makes building counterexamples more intuitive.
In \cite{ful} it was proved that some of the more complex examples of Banach spaces from \cite{LAT} also follow from the existence of 
a capturing construction scheme.
In \cite{LT} it was proved that Suslin trees and T-gaps exists if we assume there are capturing construction schemes.
These latest results only require a weaker version of capturing, namely 3-capturing.

In this work we study the consistency of capturing construction schemes and the weaker versions of capturing.
Throughout this work, when we talk about the consistency of \emph{capturing} we mean 
\textit{for every type $(m_k,n_k,r_k)_k$ there is a capturing construction scheme $\calF$ of that type} (see below for definitions),
and analogously when we talk about the consistency of $n$-\emph{capturing}.

We start by studying the consistency of $n$-capturing and $m$-\emph{Knaster} (for $2\leq n,m$) in Section~\ref{ncap.nkna}.
We show the consistency of $n$-capturing with no $(n+1)$-capturing construction scheme,
and relate this with the $m$-Knaster Hierarchy.
Recall that a forcing notion $\PP$ is said to be $m$-\emph{Knaster} (K$_m$) (for $2\leq m$ ) if for every uncountable $W\subset\PP$ there is 
$W_0\subset W$ uncountable such that for every $p_1,\ldots,p_m\in W_0$ there is $p\in\PP$ with $p\leq p_1,\ldots, p_m$.
MA$_{\omega_1}($K$_m)$ is the forcing axiom for $\aleph_1$ dense sets on K$_m$ forcings.
The main result of this Section is:
\begin{theo}\label{ncap}
$n$-capturing is independent of MA$_{\omega_1}(K_m)$ if $n\leq m$, and they are incompatible if $n>m$.
Also capturing is independent of MA$_{\omega_1}($precaliber $\aleph_1)$.
\end{theo}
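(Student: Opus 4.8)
The three assertions will all be derived from a single construction that turns an $n$-capturing scheme into a poset. Given an $n$-capturing construction scheme $\calF$ (of a fixed type with $n_k=n$), I would let $\PP=\PP(\calF)$ be the poset of finite \emph{good} configurations assembled from the blocks of $\calF$, ordered by reverse inclusion, where good is defined so that a configuration is forbidden exactly when its blocks are captured by some $F\in\calF$. The design goals are: (1) $\PP$ is $(n-1)$-Knaster, because the combinatorial structure of $\calF$ makes any $n-1$ blocks drawn from a $\Delta$-system compatible, as no captured $n$-tuple can appear among fewer than $n$ blocks; and (2) for every uncountable $W\subseteq\PP$ the $n$-capturing property produces $n$ conditions in $W$ whose union is forbidden, so $\PP$ has no uncountable centered set, while together with the dense sets $D_\alpha=\{p:\max(\bigcup p)>\alpha\}$ a generic filter would yield an uncountable good configuration, i.e. an uncountable capture-free family, contradicting $n$-capturing. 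Verifying (1) and (2) simultaneously, i.e. that capturing bites at level $n$ but not at level $n-1$, is the first main obstacle, and is where the precise numerology of the type $(m_k,n_k,r_k)$ enters.

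Granting this, the incompatibility for $n>m$ is immediate: when $m\le n-1$ an $(n-1)$-Knaster poset is $K_m$, so $\PP(\calF)$ is a $K_m$ forcing and the family $\{D_\alpha:\alpha<\omega_1\}$ witnesses a failure of MA$_{\omega_1}(K_m)$. Since MA$_{\omega_1}(K_m)\Rightarrow$ MA$_{\omega_1}(K_{n-1})$ for $m\le n-1$, refuting the case $m=n-1$ refutes all $m<n$ at once.

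For $n\le m$, independence requires two models. The \emph{does not imply} direction is again free from the construction: $\PP(\calF)$ is $(n-1)$-Knaster, hence ccc, so in any model of full MA$_{\omega_1}+\neg$CH it would meet all the $D_\alpha$, contradicting $n$-capturing; thus full MA$_{\omega_1}$, which implies MA$_{\omega_1}(K_m)$, yields a model of MA$_{\omega_1}(K_m)$ with no $n$-capturing scheme. The \emph{consistent with} direction is the substantive one. Here I would prove a preservation lemma: every $m$-Knaster (equivalently $K_m$) forcing preserves the statement ``$\calF$ is $n$-capturing'' whenever $n\le m$. The proof takes an uncountable family of names for finite sets in the extension, uses the $m$-Knaster property to thin to an uncountable $W_0$ on which any $m$ conditions are compatible, reflects $W_0$ to a ground-model uncountable $\Delta$-system, applies ground-model $n$-capturing to obtain $n$ indices whose configuration is captured, and finally, because $n\le m$, uses the common lower bound of those $\le m$ conditions to force that the captured configuration survives into the extension. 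Matching the linkedness arity $m$ to the capturing arity $n$ in this reflection is the second main obstacle.

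With the preservation lemma in hand, I would start from a ground model of $\diamondsuit$, which carries a capturing, a fortiori $n$-capturing, scheme $\calF$ by Todor\v{c}evi\'{c}~\cite{todor}, and run a finite-support iteration of $K_m$ forcings of length $\omega_2$ with the usual bookkeeping to force MA$_{\omega_1}(K_m)$. As each iterand is $K_m$ and the finite-support iteration of $K_m$ forcings is $K_m$, the lemma guarantees that $\calF$ remains $n$-capturing in the final model, giving Con(MA$_{\omega_1}(K_m)+n$-capturing). The last sentence of the theorem is handled the same way with precaliber $\aleph_1$ in place of $K_m$: precaliber $\aleph_1$ forcings are $K_m$ for every $m$, so the same reflection shows a precaliber-$\aleph_1$ iteration preserves full capturing (capture of arbitrarily many blocks), yielding Con(MA$_{\omega_1}($precaliber $\aleph_1)+$ capturing), while full MA$_{\omega_1}+\neg$CH again supplies the model satisfying the axiom but carrying no capturing scheme.
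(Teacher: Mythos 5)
Your architecture overlaps heavily with the paper's: your preservation lemma is exactly the paper's Lemma~\ref{lem1} (K$_n$ forcings preserve $n$-capturing, precaliber $\aleph_1$ forcings preserve capturing, then force the forcing axiom over a model with a capturing scheme), and your poset $\PP(\calF)$ of capture-free finite sets is exactly the paper's $\PP_{n-1}$, whose Knaster property the paper proves and uses as a secondary, ``explicit'' proof of the incompatibility half (its primary route is different and poset-free: MA$_{\omega_1}($K$_m)$ implies Todor\v{c}evi\'{c}'s property $(\bigstar)_m$, while Theorem~\ref{theorem3} shows any $(m+1)$-capturing scheme refutes $(\bigstar)_m$). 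The genuine hole in your proposal is the case $n=2$ of the independence half. You need ``full MA$_{\omega_1}$ implies there is no $n$-capturing scheme,'' and your argument for this is ``$\PP(\calF)$ is $(n-1)$-Knaster, hence ccc.'' For $n=2$ this is vacuous: every poset is $1$-Knaster, so you obtain no ccc and cannot apply MA$_{\omega_1}$ to $\PP_1$. This case cannot be waved away: it is precisely the case not covered by the literature (\cite{LT} handles $n>2$, via Suslin trees), and the paper must prove a separate Proposition~\ref{2cap} --- \emph{if $\calF$ is $2$-capturing then $\PP_1$ is ccc} --- whose proof uses the $2$-capturing hypothesis itself in an essential way (capture pairs of antichain elements inside elements $D_\alpha$ of $\calF$, then capture pairs of the $D_\alpha$'s, and conclude that two of the original antichain elements are compatible). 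Nothing in your proposal supplies this idea, and without it you do not even get that MA$_{\omega_1}(K_m)$ is consistent with the nonexistence of $2$-capturing schemes.

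Two further gaps, one substantive and one fixable. First, your claim (1), that $\PP(\calF)$ is $(n-1)$-Knaster, is where the real work lies, and you only gesture at it; moreover your stated reason --- ``no captured $n$-tuple can appear among fewer than $n$ blocks'' --- is false for arbitrary conditions, since a captured $n$-tuple can perfectly well be split between two capture-free conditions. The paper's proof must first enclose each condition in an element of $\calF$, refine to a $\Delta$-system of order-isomorphic copies, and then run a two-case analysis on levels using the initial-segment Lemma~\ref{construction.properties}; the ``numerology'' you defer is exactly this argument. Second, your contradiction mechanism relies on the sets $D_\alpha=\{p:\max(\bigcup p)>\alpha\}$ being dense, which you never justify and which is not obvious: for a fixed condition the set of ordinals whose addition creates a captured tuple could a priori be cofinal. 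The standard repair, which also makes the paper's terse remarks rigorous, is to drop dense sets altogether and use that MA for a class of posets implies every poset in that class has precaliber $\aleph_1$; applying this to the uncountable family of singleton conditions $\{\{\xi\}:\xi<\omega_1\}$ yields an uncountable centered subfamily, hence an uncountable set all of whose finite subsets are capture-free, contradicting $n$-capturing with no density claim needed.
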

The proof of this Theorem gives an alternative argument to the well known fact that
MA$_{\omega_1}($K$_m)\not\Leftarrow$MA$_{\omega_1}($K$_{m+1})$.

Section~\ref{cohen} is dedicated to proving the consistency of capturing using forcing. 
\begin{theo}\label{ch5.cohen}
Adding $\kappa\geq\aleph_1$ Cohen reals also adds a capturing construction scheme. 
\end{theo}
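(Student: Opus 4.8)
The plan is to add the scheme by a forcing $\PP$ of finite approximations and then to argue that $\PP$ is nothing but Cohen forcing. Fix the type $(m_k,n_k,r_k)_k$. Let a condition of $\PP$ be a finite construction scheme of this type on a finite subset of $\omega_1$ — that is, a finite collection of blocks together with their levels and their decompositions into sub-blocks, all obeying the amalgamation rules dictated by the type — ordered by extension. A routine sequence of density arguments (every ordinal eventually enters a block, every finite set is eventually covered, the canonical order isomorphisms between same-level blocks are total) then shows that the generic union $\calF=\bigcup G$ is a genuine construction scheme of type $(m_k,n_k,r_k)_k$ on $\omega_1$.

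First I would check that $\PP$ is ccc by the standard route: given uncountably many conditions, pass to a $\Delta$-system, and observe that two conditions agreeing on the root are compatible because the type rules allow one to amalgamate their blocks into a single finite scheme. The substantive property to force is capturing. Let $\dot A$ be a $\PP$-name for an uncountable family $(a_\al)_{\al<\omega_1}$ of finite subsets of $\omega_1$ of a fixed size, and let $p\in\PP$. The one subtlety is that a capturing configuration for $\dot A$ may involve sets that do not lie in the ground model, so in place of a density argument inside $V$ I would use the ccc of $\PP$ to read off the names: for each $\al$ choose $p_\al\le p$ deciding $a_\al$, then refine to an uncountable $\Delta$-system in which the $p_\al$ have a common root and the decided sets $a_\al$ all realize one fixed isomorphism type with a constant trace on that root. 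Selecting $n_{k+1}$ of these indices at a level $k$ large enough to accommodate the sets, I would amalgamate the corresponding $k$-blocks into a single $(k+1)$-block whose canonical isomorphisms (which are just the order isomorphisms) carry the $a_{\al_i}$ onto one another; the prior thinning to a fixed isomorphism type is exactly what makes such an amalgamation legal for the type. The resulting extension forces that $\calF$ captures $(a_{\al_i})_i$, and density yields capturing in the extension.

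The crux of the theorem, and the step I expect to be the main obstacle, is that $\PP$ is not merely some ccc forcing but is forcing-equivalent to $\mathrm{Fn}(\kappa,2)$, so that the scheme already appears after adding $\kappa$ Cohen reals. One cannot shortcut this by a soft property such as ccc or even $\sigma$-centeredness: Hechler forcing is $\sigma$-centered yet adds a dominating real, which Cohen forcing does not, so $\sigma$-centered posets need not embed into $\mathrm{Fn}(\kappa,2)$. The plan is to verify that $\PP$ is atomless, of density $\kappa$, and that, after fixing a rigid ground-model skeleton for the underlying order structure, a dense subset of $\PP$ decomposes as a finite-support product of countable atomless posets; by the uniqueness of the Cohen algebra of a given density this identifies the Boolean completion of $\PP$ with that of $\mathrm{Fn}(\kappa,2)$ (equivalently, one checks that $\PP$ completely embeds into $\mathrm{Fn}(\kappa,2)$ with a Cohen quotient, so that the $\PP$-generic, and hence the capturing scheme, lives in the Cohen extension). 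Isolating precisely which amalgamation choices become genuinely independent once the skeleton is fixed, and thereby pinning $\PP$ down as Cohen rather than as an arbitrary ccc forcing, is the point I expect to require the most care.
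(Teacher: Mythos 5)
Your strategy---force with finite partial construction schemes on $\omega_1$ and then identify that poset with Cohen forcing---is genuinely different from the paper's, and the step you yourself flag as the crux is exactly where the argument is missing, not merely delicate. For the naive finite-approximation poset there is no visible coordinate structure: a condition's blocks constrain one another globally (any two same-level blocks must meet in an initial segment of both), so incompatibility is not coordinate-wise, and ``fixing a rigid ground-model skeleton'' is never made precise---nothing in your sketch identifies what the coordinates of the alleged finite-support product of countable posets would be. Nor can the uniqueness of the Cohen algebra of a given density be invoked softly: to apply it you would have to verify something substantive about your $\PP$, e.g.\ Koppelberg's criterion that there is a club of countable regular suborders, and that verification (constructing reductions for conditions whose blocks straddle $\delta=X\cap\omega_1$, etc.) is an argument you have not given. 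The paper sidesteps this entire issue by changing what the generic object is: it does not force the scheme at all. It fixes in the ground model a construction scheme $\calF^\omega$ on $\omega$ with the extension property \eqref{omegaCS}, and forces only finite ``placement data''---finitely many limit ordinals $\delta$ with pairs $(D_\delta,a_\delta)$, $D_\delta\in\calF^\omega$, $a_\delta\in D_\delta$---which transfer $\calF^\omega$ into $\omega_1$ via the maps $\phi_{a,\delta}$. That poset is, by design, a dense suborder of a finite-support product of countable posets, hence Cohen, and capturing then follows from a $\Delta$-system density argument using \eqref{omegaCS} and Lemma~\ref{construction.properties2}. Your capturing amalgamation is right in spirit (it mirrors this density argument and the $\diamondsuit$ construction), but it too rests on an unproved extension lemma for finite schemes (that isomorphic conditions over a root can be completed to the $n_{k+1}$ pieces of the canonical decomposition of a single higher block with the root landing inside $R(F)$); that is provable, whereas the Cohen identification of your poset is the genuinely missing idea.

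There is a second gap, independent of the first: even granting that $\PP$ completely embeds into $\mathrm{Fn}(\kappa,2)$ with a Cohen quotient, it does not suffice that the $\PP$-generic scheme ``lives in'' the $\CC_\kappa$-extension. Capturing is not upward absolute---the final model contains new uncountable $\Delta$-systems which must also be captured---so you need a preservation step for the quotient forcing. This is exactly what the paper's Lemma~\ref{lem1} provides: capturing is preserved by precaliber-$\aleph_1$ (in particular Cohen) forcing, and the paper uses it to pass from $\kappa=\aleph_1$ to arbitrary $\kappa\geq\aleph_1$ via the factorization $\CC_\kappa\cong\CC_{\omega_1}\ast\CC_{\kappa\setminus\omega_1}$. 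Without an appeal of this kind, your argument, even with the crux repaired, only produces a capturing construction scheme in an intermediate model, not in the Cohen extension itself.
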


In Section~\ref{extra} we show other versions of capturing are also consistent. They are forced by adding $\aleph_1$ Cohen reals.

We follow standard notation in Set Theory (see for example Kunen~\cite{Kunen}).
When we refer to a $\Delta$-System $(s_\alpha:\alpha<\omega_1)$ we mean $s_\alpha$ are finite subsets of $\omega_1$,
and for every $\alpha<\beta<\omega_1$ we have $s_\alpha\cap s_\beta=s$ for some $s\subset\omega_1$ fixed,
 and $\max(s_\alpha)<\min(s_\beta)$.

Let us introduce the main concept of this work.
\begin{defi}
Let $(m_k)_{k<\omega}, (n_k)_{1\leq k<\omega}$ and $(r_k)_{1\leq k<\omega}$ be sequences of natural numbers such that
$m_0=1$, $m_{k-1}>r_k$ for all $k>0$, $n_k>k$ and for every $r<\omega$ there are infinitely many $k$'s with $r_k=r$. If for every $k>0$ we have
$$ m_k=n_k(m_{k-1}-r_k)+r_k$$
we say that $(m_k,n_k,r_k)_{k<\omega}$ forms a {\it type}.
\end{defi}

\begin{defi}\label{cons.sch}
We say that $\calF$ is a \emph{construction scheme of type} $(m_k,n_k,r_k)_{k<\omega}$, if $\calF\subset[\omega_1]^{<\omega}$ is a family of finite subsets of $\omega_1$, partitioned into levels $\calF=\bigcup_{k<\omega}\calF_k$, such that for every $F\in\calF$, there is $R(F)\sqsubset F$ with:
\begin{enumerate}
\item For every $A\subset\omega_1$ finite, there is $F\in\calF$ such that $A\subset F$.
\item $\forall F\in\calF_k$, $|F|=m_k$ and $|R(F)|=r_k$.
\item For all $F,E\in\calF_k$, $E\cap F\sqsubset F,E$.
\item $\forall F\in\calF_k$, there are unique $F_0,\ldots, F_{n-1}\in\calF_{k-1}$ with
$$F=\bigcup_{i<n} F_i $$
 Furthermore $n=n_k$ and $(F_i)_{i<n_k}$ forms an increasing $\Delta$-system with root $R(F)$, i.e.,
$$R(F)< F_0\setminus R(F) <\ldots < F_{n_k-1}\setminus R(F) $$
\end{enumerate}

We call the sequence $(F_i)_{i<n_k}$ of (4) the \emph{canonical decomposition} of $F$.
\end{defi}

For every type $(m_k,n_k,r_k)_{k<\omega}$, there is a construction scheme $\calF$ of that type in ZFC.
This is implicit in the proof of the consistency with $\diamondsuit$ of \cite{todor}.
It is also easy to show that for any type $(m_k,n_k,r_k)_{k<\omega}$ there is a construction scheme $\calF^\omega$ in $\omega$
(i.e, $\calF^\omega$ is a family of finite subsets of $\omega$ and it is cofinal in $\omega$ the rest is as the definition above).
We will use this fact in Section~\ref{cohen}.

To avoid confusion we will use $m_k, n_k$, and $r_k$ as above, and we will omit reference to the type of a construction scheme.
For $F\in\calF$ and $F=\bigcup_{i<n_k}F_i$ the canonical decomposition of $F$.
We write $\varphi_i:F_0\rightarrow F_i$ for the unique order preserving bijection between $F_0$ and $F_i$, or for $E,F\in\F_k$ write  $\varphi_{F,E}:F\rightarrow E$ for the unique order preserving bijection between $F$ and $E$.
Analogously, if $f$ is a function on $F_0$ we can define the function $\varphi_i(f)=f\circ\varphi_i^{-1}$ in $F_i$.

The following lemma tells us more about the structure of a construction scheme

\begin{lemma}\label{construction.properties}
For $F\in\calF_k$, $E\in\calF_l$, with $l\leq k$ we have $E\cap F\sqsubseteq E$.
\end{lemma}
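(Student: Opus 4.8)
The plan is to argue by induction on the level difference $k-l\geq 0$. Recall that for finite $A,B\subseteq\omega_1$ the relation $A\sqsubseteq B$ means that $A$ is an initial segment of $B$ under the ordering inherited from $\omega_1$; equivalently, $A\subseteq B$ and $A$ is downward closed in $B$, so that $A$ consists of the $|A|$ smallest elements of $B$. When $k=l$ there is nothing to prove beyond citing clause (3) of Definition~\ref{cons.sch}, which already gives $E\cap F\sqsubseteq E$ (indeed $\sqsubseteq E,F$) for any two members of the same level. This is the base case.

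For the inductive step assume $l<k$ and that the conclusion holds whenever the level difference is strictly smaller than $k-l$. Let $F=\bigcup_{i<n_k}F_i$ be the canonical decomposition of $F$ supplied by clause (4), so that each $F_i\in\calF_{k-1}$. Distributing the intersection over the union gives
\[
E\cap F=\bigcup_{i<n_k}(E\cap F_i).
\]
Since $l<k$ we have $l\leq k-1$, so each pair $(F_i,E)$ has level difference $(k-1)-l<k-l$. The induction hypothesis therefore applies to each term and yields $E\cap F_i\sqsubseteq E$ for every $i<n_k$.

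It then remains to observe that $E\cap F$, being a union of initial segments of $E$, is itself an initial segment of $E$: a union of downward closed subsets of the linearly ordered finite set $E$ is again downward closed, hence an initial segment. This gives $E\cap F\sqsubseteq E$ and closes the induction. The argument is genuinely short, and I expect the only two points that require care to be, first, setting up the induction on the \emph{difference} $k-l$ rather than on $k$ alone, so that peeling $F$ down to its level-$(k-1)$ pieces while keeping $E$ fixed really does decrease the induction parameter; and second, the final combinatorial remark that a union of initial segments of a fixed linearly ordered set is again an initial segment, which is where the downward-closed characterization of $\sqsubseteq$ does the work.
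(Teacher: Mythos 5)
Your proof is correct, and its inductive skeleton --- induct on the levels, pass to the canonical decomposition $F=\bigcup_{i<n_k}F_i$, and apply the induction hypothesis to each $E\cap F_i$ --- matches the paper's. Where you genuinely diverge is the finishing step. The paper does not take a union of initial segments; instead it uses the $\Delta$-system structure of the decomposition to show that $E$ can meet at most one tail $F_i\setminus R(F)$: if it met a later tail $F_j\setminus R(F)$ as well, then $E\cap F_j$ would contain a point of $E$ lying above a point of $E\cap(F_i\setminus R(F))$ that it cannot contain (the tails are pairwise disjoint), so $E\cap F_j$ would fail to be an initial segment of $E$, contradicting the induction hypothesis. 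Hence $E\cap F=E\cap F_i$ for a single $i$, and the conclusion follows at once. Your observation that a union of downward-closed subsets of the finite linear order $E$ is again downward closed is more elementary: it needs no information at all about how the pieces $F_i$ intersect one another, and it eliminates the paper's case analysis. What the paper's longer route buys is the stronger structural fact that $E\cap F$ lives inside a single member of the decomposition once it leaves the root; that fact is precisely Corollary~\ref{construction.properties3}, which the paper states without proof because it is extracted from this very step. If one adopts your proof of the lemma, that corollary still holds, but one must then run the $\Delta$-system disjointness argument separately (it can be recovered from the lemma's statement applied to $E$ and $F_j$), so the total work across the two statements is about the same; your version simply puts the cleaner, more local argument into the lemma itself.
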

\begin{proof}
We prove the lemma by induction on $k$ and $l$.
If $l=k$ the result follows by the properties of $\calF$.
It's enough to show that: if it holds for $l\leq k-1$, it holds for $l$ and $k$ as well.
Let $F$ as above and let $F=\bigcup_{i<n_{k}}F_i$ be its canonical decomposition. 
Since the $F_i$'s are in $\calF_{k-1}$ we can apply our hypothesis and $E\cap F_i\sqsubseteq E$ for 
every $i<n_k$. If $E\cap(F\setminus R(F))=\emptyset$ then the result follows, otherwise
let $i<n_k$ be minimal such that $E\cap(F_i\setminus R(F))\neq\emptyset$ then $E\cap F=E\cap F_i$.
Because if not, there is $i<j<n_{k+1}$ with $E\cap F_j\not\sqsubseteq E$.
Thus we have $E\cap F=E\cap F_i\sqsubset E$ and the result follows.
\end{proof}

\begin{coro}\label{construction.properties3}
For $F\in\calF_k$, $E\in\calF_l$ and $F=\bigcup_{i<n_k}F_i$ the canonical decomposition of $F$.
If $E\subset F$ and $l<k$ then there is some $i<n_k$ with $E\subset F_i$.
In particular, if $l=k-1$ we have $E=F_i$.
\end{coro}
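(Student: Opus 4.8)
The plan is to exploit Lemma~\ref{construction.properties} to control how $E$ meets each piece of the canonical decomposition. Since $l<k$, we have $k-1\geq l$, so each $F_i\in\calF_{k-1}$ stands in the relation required by Lemma~\ref{construction.properties} with $E\in\calF_l$. Applying that lemma to the pair $(F_i,E)$ for each $i<n_k$ yields $E\cap F_i\sqsubseteq E$; that is, every intersection $E\cap F_i$ is an initial segment of $E$.

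Next I would use the hypothesis $E\subseteq F$ together with the decomposition $F=\bigcup_{i<n_k}F_i$ to write
$$ E = E\cap F = \bigcup_{i<n_k}(E\cap F_i). $$
The right-hand side is a union of initial segments of the fixed finite set $E$. Since the initial segments of a finite linearly ordered set are totally ordered by inclusion, this family is nested, and the union of a nested finite family equals its largest member. Hence there is some $i<n_k$ with $E\cap F_i=E$, i.e.\ $E\subseteq F_i$, which is exactly the first assertion.

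For the final clause, when $l=k-1$ both $E$ and $F_i$ lie in $\calF_{k-1}$, so property~(2) of Definition~\ref{cons.sch} gives $|E|=|F_i|=m_{k-1}$. Combined with $E\subseteq F_i$, equal cardinalities force $E=F_i$.

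I expect the whole argument to be immediate once Lemma~\ref{construction.properties} is invoked: the only step that needs a word of care is the elementary observation that a union of nested initial segments of a finite set is the longest of them, since that is precisely what upgrades the ``initial segment'' information into a genuine containment in a single $F_i$. There is no real obstacle beyond bookkeeping the indices and checking that $l<k$ (equivalently $l\leq k-1$) is what makes Lemma~\ref{construction.properties} applicable to each $F_i$.
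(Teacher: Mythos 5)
Your proof is correct and takes essentially the same route as the paper, which states this as an immediate consequence of Lemma~\ref{construction.properties}: applying that lemma to each piece $F_i\in\calF_{k-1}$ (legitimate since $l\leq k-1$) and observing that the nested initial segments $E\cap F_i$ of the finite set $E$ must union up to the largest of them is exactly the mechanism already at work inside the paper's proof of the lemma (where non-nestedness is what produces the contradiction $E\cap F_j\not\sqsubseteq E$). The cardinality argument $|E|=|F_i|=m_{k-1}$ for the case $l=k-1$ is likewise the intended one.
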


\begin{coro}\label{construction.properties4}
Let $E,F\in\calF_k$, then $\varphi_{E,F}(\calF\restriction E)=\calF\restriction F$. Where $\calF\restriction F=\{L\in\calF: L\subset F\}$.
\end{coro}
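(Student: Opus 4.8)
The plan is to prove the equality by induction on the common level $k$ of $E$ and $F$, first reducing the two required inclusions to a single one. Since $\varphi_{F,E}=\varphi_{E,F}^{-1}$ by uniqueness of order isomorphisms, it suffices to establish $\varphi_{E,F}(\calF\restriction E)\subseteq\calF\restriction F$; applying this with the roles of $E$ and $F$ interchanged gives $\varphi_{F,E}(\calF\restriction F)\subseteq\calF\restriction E$, which is exactly the reverse inclusion $\calF\restriction F\subseteq\varphi_{E,F}(\calF\restriction E)$. Because $\varphi_{E,F}$ maps $E$ onto $F$, any set $\varphi_{E,F}(L)$ with $L\subseteq E$ automatically lies in $F$, so the whole content is to show that $\varphi_{E,F}(L)\in\calF$ whenever $L\in\calF$ and $L\subseteq E$.

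The first step, and what I expect to be the crux, is to record that $\varphi_{E,F}$ respects canonical decompositions. Writing $E=\bigcup_{i<n_k}E_i$ and $F=\bigcup_{i<n_k}F_i$ for the canonical decompositions with roots $R(E)$ and $R(F)$, I would argue that $\varphi_{E,F}(R(E))=R(F)$, that $\varphi_{E,F}(E_i)=F_i$ for every $i<n_k$, and moreover that $\varphi_{E,F}\restriction E_i=\varphi_{E_i,F_i}$. This is forced by order-preservation together with the uniform block structure of the increasing $\Delta$-systems: listed in increasing order, the elements of $E$ consist of the $r_k$ elements of $R(E)$, then the $m_{k-1}-r_k$ elements of $E_0\setminus R(E)$, then those of $E_1\setminus R(E)$, and so on, and identically for $F$ with the very same block sizes, since $E$ and $F$ share the level-$k$ parameters $r_k$ and $m_{k-1}$. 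An order-preserving bijection must send the $j$-th element of $E$ to the $j$-th element of $F$, hence carry each block onto the corresponding block; uniqueness of the order isomorphism then identifies $\varphi_{E,F}\restriction E_i$ with $\varphi_{E_i,F_i}$. The delicate point is precisely to verify that the two $\Delta$-systems have matching block sizes, which they do because both sets have level $k$.

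With this structural fact in hand the induction is routine. For the base case $k=0$ every element of $\calF_0$ is a singleton (as $m_0=1$), so $\calF\restriction E=\{E\}$ and $\varphi_{E,F}(E)=F\in\calF$. For the inductive step, fix $L\in\calF_l$ with $L\subseteq E$; since the $m_k$ are strictly increasing and $L\subseteq E$ forces $|L|\leq|E|=m_k$, we have $l\leq k$. If $l=k$, then $|L|=m_k=|E|$ together with $L\subseteq E$ gives $L=E$, whence $\varphi_{E,F}(L)=F\in\calF$. If $l<k$, Corollary~\ref{construction.properties3} provides some $i<n_k$ with $L\subseteq E_i$; then $\varphi_{E,F}(L)=\varphi_{E_i,F_i}(L)$, and since $E_i,F_i\in\calF_{k-1}$ the induction hypothesis yields $\varphi_{E_i,F_i}(\calF\restriction E_i)=\calF\restriction F_i$, so $\varphi_{E_i,F_i}(L)\in\calF$. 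In either case $\varphi_{E,F}(L)\in\calF\restriction F$, completing the inclusion and hence, by the symmetry noted above, the proof.
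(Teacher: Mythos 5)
Your proof is correct. The paper states this corollary without proof, so there is nothing to compare against line by line; what you have written is precisely the argument the paper leaves implicit: the key structural observation that $\varphi_{E,F}$ matches the canonical decompositions block by block (so that $\varphi_{E,F}\restriction E_i=\varphi_{E_i,F_i}$, forced by order-preservation and the fact that $E$ and $F$ share the level-$k$ parameters $r_k$, $m_{k-1}$, $n_k$), followed by induction on $k$ using Corollary~\ref{construction.properties3} to locate $L$ inside a single block, with the symmetry reduction handling the reverse inclusion. All steps check out, including the minor facts you rely on (strict monotonicity of $m_k$, which follows from $n_k\geq 2$ and $r_k<m_{k-1}$, and $\varphi_{F,E}=\varphi_{E,F}^{-1}$ by uniqueness of order isomorphisms).
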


\begin{lemma}\label{construction.properties2}
For $F\in\calF_k$, $E\in\calF_l$ and $E\subset F$ (in particular $l\leq k$). For every $\mu\in E$ there is a copy $E^*$ of $E$ in $F$ such that
\begin{enumerate}
\item $E^*\cap (\mu+1)=E\cap(\mu+1)$.
\item $E^*\setminus\mu$ is an interval of $F$ with $\mu\in E$.
\end{enumerate}
\end{lemma}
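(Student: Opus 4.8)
The plan is to prove the statement by induction on the level $k$ of $F$. When $l=k$, the containment $E\subseteq F$ together with $|E|=|F|=m_k$ forces $E=F$, so one takes $E^*=E$: condition (1) holds trivially and $E^*\setminus\mu=F\setminus\mu$ is a terminal interval of $F$. For the inductive step I assume the lemma for all sets of level $<k$ and treat an $F\in\calF_k$ with $E\in\calF_l$, $l<k$ (the case $l=k$ being the base case just handled). Let $F=\bigcup_{i<n_k}F_i$ be the canonical decomposition with root $R=R(F)$, and use Corollary~\ref{construction.properties3} (which requires $l<k$) to fix $j<n_k$ with $E\subseteq F_j$. The idea is to relocate the upper part of $E$ into one of the level-$(k-1)$ pieces $F_i$ and invoke the induction hypothesis there; the delicate point, discussed below, is that an interval of $F_i$ need not be an interval of $F$ because $R$ sits at the very bottom of $F$ while $F_i\setminus R$ sits in the middle.

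Before splitting into cases I would record two structural facts coming from clause (4) of Definition~\ref{cons.sch}, namely the $\Delta$-system ordering $R<F_0\setminus R<\cdots<F_{n_k-1}\setminus R$. First, $F_0=R\cup(F_0\setminus R)$ is an initial segment of $F$, and each block $F_i\setminus R$ is a convex (contiguous) subset of $F$; consequently an interval of $F_0$ is automatically an interval of $F$, and an interval of $F_i$ that is contained in $F_i\setminus R$ is automatically an interval of $F$. Second, since $R$ is the initial segment of size $r_k$ of every $F_i$, the order isomorphism $\varphi_{F_j,F_0}$ fixes $R$ pointwise.

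The case split is on the position of $\mu$. If $\mu\notin R$ then $R<\mu$, so $\mu\in F_j\setminus R$ and every element of $E$ above $\mu$ also lies in $F_j\setminus R$. Applying the induction hypothesis to $E\subseteq F_j$ at the point $\mu$ yields a copy $E^*\subseteq F_j$ with $E^*\cap(\mu+1)=E\cap(\mu+1)$ and $E^*\setminus\mu$ an interval of $F_j$; since this interval starts at $\mu>\max R$ it lies in $F_j\setminus R$, hence is an interval of $F$ by the first structural fact, and we are done. If instead $\mu\in R$, then $E\cap(\mu+1)\subseteq R$ because $R$ is an initial segment of $F$. Here I would first push $E$ into $F_0$ by setting $\bar E=\varphi_{F_j,F_0}(E)$, which is a copy of $E$ by Corollary~\ref{construction.properties4}; as $\varphi_{F_j,F_0}$ fixes $R$ pointwise it fixes $E\cap(\mu+1)$, so $\bar E\cap(\mu+1)=E\cap(\mu+1)$ and $\mu\in\bar E$. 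Applying the induction hypothesis to $\bar E\subseteq F_0$ at $\mu$ then produces $E^*\subseteq F_0$ with $E^*\cap(\mu+1)=E\cap(\mu+1)$ and $E^*\setminus\mu$ an interval of $F_0$, which is an interval of $F$ since $F_0$ is an initial segment of $F$.

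The main obstacle is precisely the bookkeeping around the root: one must guarantee that the interval manufactured inside a level-$(k-1)$ piece survives as an interval of the whole of $F$, and this is exactly why the two cases $\mu\in R$ and $\mu\notin R$ must be separated—the first forcing a relocation into the initial piece $F_0$ (using that $\varphi_{F_j,F_0}$ fixes the root), the second staying put inside the convex block $F_j\setminus R$. Verifying that $F_0$ is an initial segment of $F$ and that each $F_i\setminus R$ is convex in $F$ is routine from the $\Delta$-system ordering, and the fact that the relocated sets are genuine copies is exactly the content of Corollary~\ref{construction.properties4}.
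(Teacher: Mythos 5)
Your proof is correct and follows essentially the same route as the paper: induction on the level of $F$, locating $E$ inside a piece $F_j$ of the canonical decomposition via Corollary~\ref{construction.properties3}, splitting on whether $\mu\in R(F)$, and using $\varphi_{F_j,F_0}$ (Corollary~\ref{construction.properties4}) to relocate into the initial piece $F_0$ when $\mu$ lies in the root. The only (immaterial) difference is that you apply the relocation map before invoking the induction hypothesis, while the paper applies it to the copy obtained afterwards; your write-up also spells out the convexity facts the paper leaves implicit.
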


\begin{proof}
We prove the lemma by induction on $k$ and $l$.
The result follows for $l=k$. Suppose $l<k$ is given and the result holds for $l\leq k-1$. %the result hold for $l$ and $k-1$ and $l<k$.
Take $F=\bigcup_{i<n_k}F_i$, the canonical decomposition of $F$.
By Corollary~\ref{construction.properties3} there is $i<n_k$ such that $E\subset F_i$. By the induction hypothesis there is $E^{**}$ 
a copy of $E$ in $F_i$ such that the conclusion holds. If $\mu\notin R(F)$ then $E^*=E^{**}$ works.
Otherwise, let $E^*=\varphi_{F_i,F_0}(E^{**})$ by Corollary~\ref{construction.properties4}, $E^*$ is a copy of $E$ and $E^*\setminus\mu$ is an interval of $F_0$. Since $\mu\in R(E)$ then (1) holds, and (2) holds because $F_0$ is an interval of $F$.
\end{proof}

The following concept is useful to construct many other results as mentioned above.

\begin{defi}
Let $\calF$ be a construction scheme, and $2\leq n$. We say that $\calF$ is $n$-\emph{capturing} if for every uncountable $\Delta$-system
$(s_\xi)_{\xi<\omega_1}$ of finite subsets of $\omega_1$ with root $s$, there are $\xi_0<\ldots<\xi_{n-1}<\omega_1$,
and $F\in\calF$ with canonical decomposition $F=\bigcup_{i<n_k}F_i$, such that
\begin{align*}
                                                      &s                  \subset R(F)\\
   \mbox{for every $i<n$,} \quad  &                    s_{\xi_i}\setminus s  \subset F_i\setminus R(F), \\
 \mbox{for every $i<n$,} \quad     &      \varphi_i(s_{\xi_0})=s_{\xi_i}.
\end{align*}

We say that $\calF$ is \emph{capturing} if $\calF$ is $n$-capturing for every $n<\omega$.
\end{defi}

\section{The Hierarchies of $n$-Knaster and $n$-capturing}\label{ncap.nkna}
Recall that MA$_{\omega_1}(K_n)$ implies MA$_{\omega_1}(K_m)$ for every $m\geq n$,
whereas $n$-capturing implies $m$-capturing for every $m\leq n$.
Thus, we have the following two hierarchies:

\begin{center}
\includegraphics[width=15cm]{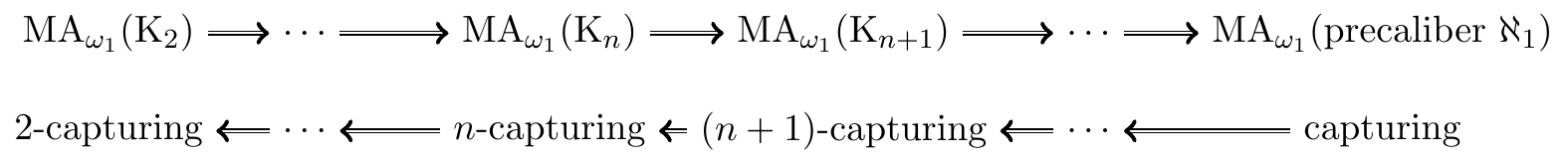}
\end{center}

The main result of this section give us a relation between this two types of axioms and shows that none of the implications
above can be reversed.
\renewcommand{\thetheo}{\ref{ncap}}
\begin{theo}
$n$-capturing is independent of MA$_{\omega_1}(K_m)$ if $n\leq m$, and they are incompatible if $n>m$.
Also capturing is independent of MA$_{\omega_1}($precaliber $\aleph_1)$.
\end{theo}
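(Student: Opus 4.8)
The plan is to reduce the whole statement to two technical cores and then read off everything else from the hierarchy of implications already recorded in this section, together with a $\diamondsuit$ model and a model of full MA$_{\omega_1}$. The first core is a \emph{preservation lemma}: if $\PP$ is $K_m$ and $\calF$ is $n$-capturing with $n\le m$, then $\calF$ stays $n$-capturing in $V^{\PP}$. The second is a \emph{destruction lemma}: from an $n$-capturing $\calF$ one builds a forcing which is ccc, and in fact $K_{n-1}$ when $n\ge 3$, together with $\aleph_1$ dense sets admitting no filter, so that $n$-capturing implies $\neg$MA$_{\omega_1}(K_{n-1})$. Granting these, the incompatibility for $n>m$ is immediate: then $m\le n-1$, and the recorded implication MA$_{\omega_1}(K_m)\Rightarrow$MA$_{\omega_1}(K_{n-1})$ shows MA$_{\omega_1}(K_m)$ already fails once $n$-capturing holds. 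The two non-implications needed for the independence in the range $n\le m$ then come cheaply: a $\diamondsuit$ model (Todor\v{c}evi\'{c}) has capturing, hence $n$-capturing, but satisfies CH and so fails MA$_{\omega_1}(K_m)$; and a model of full MA$_{\omega_1}$ satisfies MA$_{\omega_1}(K_m)$ while, by the destruction lemma (whose forcing is ccc), adding an uncountable capture-free set and so failing $n$-capturing.

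For the preservation lemma I would argue as follows. Let $\dot S=(\dot s_\xi)_{\xi<\omega_1}$ name an uncountable $\Delta$-system with root $\dot s$; below an arbitrary condition $p$ fix representatives $p_\xi\le p$ deciding $\dot s_\xi=t_\xi$ and $\dot s=s$, and apply $K_m$ to extract an uncountable $W_0$ all of whose $m$-subsets are compatible. Pairwise compatibility inside $W_0$ forces the decided values $t_\xi$ to form a genuine ground-model $\Delta$-system with root $s$, so ground-model $n$-capturing yields $\xi_0<\dots<\xi_{n-1}$ and $F\in\calF$ capturing $(t_{\xi_i})_{i<n}$. The crucial point is that the $n\le m$ conditions $p_{\xi_0},\dots,p_{\xi_{n-1}}$ lie in $W_0$ and are therefore simultaneously compatible; any common extension $q\le p$ forces that $F$ captures $\dot S$ at these indices, capturing being absolute between $V$ and $V^{\PP}$. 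As $p$ was arbitrary this is dense, so $\Vdash$ ``$\calF$ is $n$-capturing''. Since $K_m$ is preserved by finite-support iterations, the same computation run inside the whole iteration $\PP_{\omega_2}$ shows that a standard length-$\omega_2$ finite-support iteration of $K_m$ forcings forcing MA$_{\omega_1}(K_m)$ keeps $\calF$ $n$-capturing; this furnishes Con($n$-capturing $+$ MA$_{\omega_1}(K_m)$) for every $n\le m$, completing the independence. Taking $m=n$ and combining with the destruction lemma gives a model of MA$_{\omega_1}(K_n)$ in which MA$_{\omega_1}(K_{n-1})$ fails, i.e.\ the promised alternative proof that MA$_{\omega_1}(K_{m+1})\not\Rightarrow$MA$_{\omega_1}(K_m)$.

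For the destruction lemma I would force an uncountable $A\subseteq\omega_1$ carrying enough side information that no $n$ of its members are captured by any $F\in\calF$: the dense sets $D_\alpha=\{p:\max p>\alpha\}$ make $A$ uncountable, and such an $A$ contradicts $n$-capturing applied to $(\{x\})_{x\in A}$. The content is that the poset of these finite approximations-with-promises is $K_{n-1}$ (and ccc in general): after a $\Delta$-system refinement and a further refinement to a single order type relative to $\calF$, any $n-1$ conditions amalgamate without completing a forbidden configuration, precisely because a captured configuration occupies the $n$ distinct blocks $F_0,\dots,F_{n-1}$ of some $F$ off its root $R(F)$, and $n-1$ blocks together with the shared root cannot supply all $n$ required corresponding points. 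Thus MA$_{\omega_1}(K_{n-1})$ produces the forbidden $A$ and $n$-capturing fails.

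The precaliber statement runs along the same two cores. A $\diamondsuit$ model gives capturing without MA$_{\omega_1}($precaliber $\aleph_1)$, and full MA$_{\omega_1}$ gives MA$_{\omega_1}($precaliber $\aleph_1)$ while destroying capturing. For the consistency of the two together, the preservation argument upgrades automatically: a forcing of precaliber $\aleph_1$ refines every uncountable set of conditions to an uncountable \emph{centered} one, so the amalgamation step succeeds for \emph{all} $n$ at once, showing precaliber-$\aleph_1$ forcings preserve every $n$-capturing simultaneously, hence preserve capturing. A length-$\omega_2$ finite-support iteration of precaliber-$\aleph_1$ forcings over a $\diamondsuit$ (or Cohen, by Theorem~\ref{ch5.cohen}) model then forces MA$_{\omega_1}($precaliber $\aleph_1)$ while keeping a capturing scheme. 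I expect the main obstacle to be the $K_{n-1}$-verification of the destruction forcing: designing the side conditions so that the amalgamation of $n-1$ conditions is always possible yet that of the $n$ captured conditions is blocked is exactly where the threshold $n$ versus $n-1$ must be engineered, and it mirrors, in reverse, the compatibility step that powers the preservation lemma.
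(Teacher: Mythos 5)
Your preservation half coincides with the paper's own route: Lemma~\ref{lem1} is exactly your preservation lemma (extract an $m$-linked uncountable subfamily, capture in the ground model, amalgamate the $n\le m$ relevant conditions), and the paper obtains the consistency of $n$-capturing with MA$_{\omega_1}(\mathrm{K}_m)$, and of capturing with MA$_{\omega_1}(\text{precaliber }\aleph_1)$, precisely by forcing the axiom with a single K$_m$ (resp.\ precaliber $\aleph_1$) poset over a model with a capturing scheme. The divergence is in the incompatibility direction and in how a model of MA destroys capturing: the paper does \emph{not} use a destruction forcing there. It quotes Todor\v{c}evi\'{c}'s theorem that MA$_{\omega_1}(\mathrm{K}_m)$ implies the partition property $(\bigstar)_m$, and proves (Theorem~\ref{theorem3}) that an $(m+1)$-capturing scheme yields a family of functions $f_\alpha\colon\omega\to\omega$ witnessing the failure of $(\bigstar)_m$; incompatibility for $n>m$ is immediate. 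For the claim that full MA$_{\omega_1}$ kills $n$-capturing, the paper cites the literature for $n>2$ and proves Proposition~\ref{2cap} for $n=2$. Your destruction-forcing route corresponds to material the paper presents only as a supplementary remark (the poset $\PP_n$ of capture-free finite sets and the lemma that it is K$_n$), so the skeleton is viable, but as you set it up it has two genuine gaps.

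First, your mechanism for producing the uncountable capture-free set is the family of sets $D_\alpha=\{p:\max p>\alpha\}$, and you never prove these are dense; density is not automatic and very likely fails for some schemes. Extending a condition $P$ by a point $\xi$ can complete a captured $n$-tuple, and the set of bad $\xi$'s is the set of values $\varphi^F_{n-1}(\min S)$ over all $F\in\calF$ placing some $(n-1)$-subset $S\subseteq P$ in capturing position; nothing in the axioms of a construction scheme makes this set countable, so it could a priori cover a tail of $\omega_1$. The standard repair avoids density altogether: MA$_{\omega_1}(\Gamma)$ implies every poset in $\Gamma$ has precaliber $\aleph_1$ (the usual dichotomy argument applied to the poset itself), and applying precaliber to the uncountable family of singleton conditions yields an uncountable centered family, i.e., an uncountable set with no captured $n$-tuple. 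This is exactly how Proposition~\ref{2cap} gets used under MA$_{\omega_1}$. Second, your stated reason that the poset is K$_{n-1}$ --- that ``$n-1$ blocks together with the shared root cannot supply all $n$ required corresponding points'' --- is not the actual obstruction: the $n-1$ conditions being amalgamated are arbitrary finite sets and need not align with the blocks of a putative capturing $F$ at all. The correct argument (the paper's proof that $\PP_n$ is K$_n$) is a pigeonhole --- a captured $n$-tuple inside the union of $n-1$ suitably refined conditions must put two of its points into one condition --- followed by a case split on the level of $F$ versus the level $k$ of the enveloping sets $D_\alpha\in\calF_k$, using the coherence Lemma~\ref{construction.properties} either to push the entire tuple into a single condition (contradicting that it is a condition) or to contradict disjointness of the blocks. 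Moreover, for $n=2$ linkedness is vacuous, and ccc-ness of $\PP_1$ requires a separate, genuinely nontrivial argument that uses $2$-capturing itself (Proposition~\ref{2cap}); your proposal gives no argument for this case. Since the destruction lemma carries both your incompatibility direction and your ``full MA$_{\omega_1}$ destroys capturing'' direction, these gaps must be filled (or replaced by the paper's $(\bigstar)_m$ argument) before the proof is complete.
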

\renewcommand{\thetheo}{\thesection.\arabic{theo}}
\addtocounter{theo}{-1}
We start the analysis of $n$-capturing with the following Preservation Lemma.

\begin{lemma}\label{lem1}
Capturing is preserved by $K_n$ forcing notions.
Let $\PP$ be a $K_n$ forcing notion and let $\calF$ be a $n$-capturing construction scheme on $V$.
If $G\subset\PP$ is a generic filter for $\PP$, then $\check{\calF}$ is a $n$-capturing construction scheme on $V[G]$.
In particular capturing is preserved by precaliber $\aleph_1$ forcing notions.
\end{lemma}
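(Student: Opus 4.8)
The plan is to show that a $K_n$ forcing $\PP$ cannot destroy the $n$-capturing property of $\calF$. Since $\calF \subset [\omega_1]^{<\omega}$ is a ground-model family of finite sets of countable ordinals, all of its structural properties from Definition~\ref{cons.sch} (levels, canonical decompositions, roots, the maps $\varphi_i$) are absolute: they are determined by finitely much information about finite subsets of $\omega_1$, and no new countable ordinals are added. So $\check{\calF}$ remains a construction scheme of the same type in $V[G]$, and the entire content of the lemma is that the \emph{capturing witness} still exists for every $\Delta$-system that newly appears in the extension. The essential observation is that $n$-capturing is a statement about $\Delta$-systems of \emph{finite} subsets of $\omega_1$, and $\PP$ being $K_n$ gives us exactly enough coherence among $n$ conditions to reduce any such new $\Delta$-system to a ground-model one.

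First I would work in $V[G]$ and fix a name for an uncountable $\Delta$-system $(s_\xi)_{\xi<\omega_1}$ with root $s$; I want to find $\xi_0 < \cdots < \xi_{n-1}$ and an $F \in \calF$ witnessing capturing. Pick conditions $p_\xi \in G$ and names forcing a decision about each $s_\xi$, so that $p_\xi \fr \dot{s}_{\xi} = \check{t}_\xi$ for some $t_\xi \in [\omega_1]^{<\omega}$ in $V$. By thinning to an uncountable set and applying the $\Delta$-system lemma in $V$, I may assume $(t_\xi)_{\xi < \omega_1}$ is itself an uncountable $\Delta$-system in $V$ with some root $t$, and (by a further pressing-down / counting argument) that the conditions $p_\xi$ all have the same ``shape'' relative to this system. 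Now apply $n$-capturing \emph{in $V$} to the ground-model $\Delta$-system $(t_\xi)_\xi$: this yields $\xi_0 < \cdots < \xi_{n-1}$ and $F \in \calF_k$ with canonical decomposition $F = \bigcup_{i<n_k} F_i$ satisfying $t \subset R(F)$, each $t_{\xi_i}\setminus t \subset F_i \setminus R(F)$, and $\varphi_i(t_{\xi_0}) = t_{\xi_i}$.

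The $K_n$ hypothesis enters precisely here: I need the finitely many conditions $p_{\xi_0}, \dots, p_{\xi_{n-1}}$ deciding the relevant coordinates to be compatible, so that their common lower bound lies in $G$ (after reflecting the generic filter appropriately) and forces the captured configuration to be correct. Concretely, I would apply $K_n$ to an uncountable subcollection of the $p_\xi$ to extract an uncountable $W_0$ on which every $n$-tuple has a common extension; I then run the $V$-level capturing argument \emph{inside $W_0$}, so the chosen $p_{\xi_0},\dots,p_{\xi_{n-1}}$ admit a lower bound $p$. Since these conditions already decide $\dot{s}_{\xi_i} = t_{\xi_i}$ and $F, R(F), \varphi_i$ are ground-model objects, $p$ forces that $F$ and the indices $\xi_0, \dots, \xi_{n-1}$ witness capturing for $(\dot{s}_\xi)_\xi$, completing the density argument that $\check{\calF}$ is $n$-capturing in $V[G]$.

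The main obstacle is the synchronization between the generic filter and the $K_n$ coherence: the standard $K_n$ definition only guarantees a common lower bound for $n$-tuples drawn from a single uncountable $W_0$, so I must arrange that the $n$ indices output by the $V$-capturing step all come from $W_0$ and that their associated conditions genuinely lie in (or are compatible with) $G$. The clean way to handle this is to phrase the whole argument as a density/genericity statement: show that for every name $(\dot{s}_\xi)_\xi$ and every condition, one can extend to a condition forcing the existence of a capturing $F$, using $K_n$ to supply the compatibility of the $n$ deciding conditions. The final sentence of the lemma is then immediate, since a precaliber $\aleph_1$ forcing is $K_n$ for every $n$, so it preserves $n$-capturing for all $n$ simultaneously, hence preserves capturing.
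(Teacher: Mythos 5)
Your proposal is correct and follows essentially the same route as the paper: decide the sets of the $\Delta$-system with ground-model conditions, use $K_n$ to extract an uncountable $n$-linked subfamily, apply $n$-capturing in $V$ inside that subfamily, and take a common lower bound of the $n$ deciding conditions to force the capture. The paper's proof is just a terser version of this (stated for a name for an uncountable subset of $\omega_1$, i.e.\ the singleton case), and your extra care with deciding the finite sets and phrasing the conclusion as a density argument fills in details the paper leaves implicit.
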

\begin{proof}
Let $\PP$ be a $K_n$ forcing notion and $\dot{\Gamma}$ a $\PP$-name for an uncountable subset of $\omega_1$.
Let $W\subset\omega_1$ and $p_\alpha\in\PP$, $\alpha\in W$ such that
$$ p_\alpha\Vdash \alpha\in \dot{\Gamma}$$
for every $\alpha\in W$.
Since $\PP$ is $K_n$ there is $n$-linked $W_0\subset W$ uncountable.
Recall $\calF$ is $n$-capturing in $V$, therefore there are $\alpha_0<\ldots<\alpha_{n-1}$ in $W_0$ which are captured by $\calF$.
We find now $q\in\PP$ with $q\leq p_0,\ldots, p_{n-1}$, then
$$q\Vdash \alpha_0,\ldots,\alpha_{n-1}\in\dot{\Gamma},\mbox{ and they are captured by $\check{\calF}$.}$$
\end{proof}

Consider the following property
\begin{itemize}
\item[$(\bigstar)_m$ ] For every $\Gamma\subset\omega^\omega$ there is $\Gamma_0\subset\Gamma$ uncountable 
such that $\Gamma_0$ has no $g_0,\ldots,g_m$ and $k<\omega$ with
$g_0\restriction k=\ldots=g_m\restriction k$, and $|\{g_0(k),\ldots,g_m(k)\}|=m+1$.
\end{itemize}
Recall the following result of Todor\v{c}evi\'{c} implicit in \cite{Tcomb}% Add reference

\begin{theo}[Todor\v{c}evi\'{c}]
MA$_{\omega_1}(K_m)$ implies $(\bigstar)_m$.
\end{theo}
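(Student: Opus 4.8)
The plan is to recast $(\bigstar)_m$ as a statement about trees and then add a witness by a $K_m$ forcing. For $\Gamma\subset\omega^\omega$ write $T(\Gamma)=\{g\restriction k:g\in\Gamma,\ k<\omega\}$ for its tree of initial segments, and call $\Gamma_0\subset\omega^\omega$ \emph{good} if every node of $T(\Gamma_0)$ has at most $m$ immediate successors. A forbidden configuration $g_0,\dots,g_m,k$ of $(\bigstar)_m$ is exactly a node $t=g_0\restriction k$ carrying the $m+1$ distinct successors $g_0(k),\dots,g_m(k)$, so $(\bigstar)_m$ says that every uncountable $\Gamma$ has an uncountable good subset. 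First I would fix an enumeration $\Gamma=\{g_\xi:\xi<\omega_1\}$ and force with $\PP$, the finite good $p\subset\Gamma$ ordered by reverse inclusion, applying MA$_{\omega_1}(K_m)$ to the $\aleph_1$ sets $D_\alpha=\{p\in\PP:g_\xi\in p\text{ for some }\xi>\alpha\}$. If $G$ is the resulting filter then $\Gamma_0=\bigcup G$ is good, since a forbidden configuration is finite and $G$ is directed, and it is uncountable since it meets every $D_\alpha$.

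It remains to verify density and that $\PP$ is $K_m$. For density I would first delete from $\Gamma$ every function some initial segment of which has only countably many extensions in $\Gamma$; as $T(\Gamma)$ is countable this removes only countably many functions, so afterwards every node of $T(\Gamma)$ has uncountably many extensions. Then, given $p\in\PP$ and $\alpha$, I would pick $g_0\in p$ and a level $N$ separating all functions of $p$ and exceeding the length of every node of $T(p)$ that already has $m$ successors, and choose $g\in\Gamma$ of index $>\alpha$ extending $g_0\restriction N$, which is possible since that node has uncountably many extensions. Any forbidden configuration in $p\cup\{g\}$ must use $g$ together with $m$ functions of $p$ through a common node $t=g\restriction k$; if $k\ge N$ then those $m$ functions all restrict to $g_0\restriction N$, of which $p$ contains only one, contradicting $m\ge2$, while if $k<N$ then $g$ and $g_0$ agree at $k$, so replacing $g$ by $g_0$ produces a forbidden configuration already inside $p$. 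Both are impossible, so $p\cup\{g\}\in D_\alpha$.

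The main obstacle is the $K_m$ property, and this is the step in which the value $m$ is spent. Given uncountable $W\subset\PP$, to each $p$ I attach its finite branching tree $S_p=\{g\restriction k:g\in p,\ k\le N_p\}$, where $N_p$ is least such that the functions of $p$ are pairwise separated at level $N_p$. Since $T(\Gamma)$ is countable it has only countably many finite subtrees, so by the pigeonhole principle I may pass to an uncountable $W_0$ on which $S_p$ is a fixed tree $S$; let $N$ be its height. Then $W_0$ is $m$-linked: for $p_1,\dots,p_m\in W_0$ and a node $t$, if $|t|<N$ every function of every $p_i$ follows $S$ below $N$, so the successors of $t$ in $\bigcup_i p_i$ are those of $S$, at most $m$; and if $|t|\ge N$ then all functions through $t$ share the same level-$N$ leaf of $S$, and each $p_i$ contributes at most one of them, giving at most $m$ functions and hence at most $m$ successors. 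Thus $\bigcup_i p_i$ is good and is a common extension. The delicate point is precisely that $m$ conditions sharing a single finite branching tree contribute at most one branch each above the shared part, keeping the branching at $m$; with $m+1$ conditions the bound would fail, which is the combinatorial shadow of the incompatibility asserted in Theorem~\ref{ncap}.
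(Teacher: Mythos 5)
Your proof is correct, but note that the paper itself offers no proof of this statement: it is recalled as a result of Todor\v{c}evi\'{c}, cited as implicit in \emph{Partition Problems in Topology}, so there is no in-paper argument to compare against. What you have written is a complete, self-contained derivation, and it is essentially the standard way such partition consequences of MA$_{\omega_1}(K_m)$ are obtained: recast $(\bigstar)_m$ as ``every uncountable $\Gamma$ has an uncountable subset whose tree of initial segments branches at most $m$-fold,'' force with finite such subsets, and spend the Knaster hypothesis on the poset. All three key steps check out. The equivalence between forbidden configurations and $(m+1)$-fold branching is exact (distinctness of the $g_i$ is forced by the $m+1$ distinct values). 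The density argument is sound: after pruning, every node of $T(\Gamma)$ has uncountably many extensions, and your two cases ($k\geq N$ kills the configuration by separation since $m\geq 2$; $k<N$ transfers it into $p$ by swapping $g$ for $g_0$) are both airtight. The $K_m$ verification is the real content and is correct: fixing the finite separating tree $S$ forces each of $m$ conditions to contribute at most one branch above each leaf of $S$, while below the separation level all branching is inherited from $S$, which branches at most $m$-fold because it is the truncation of a good tree; your closing remark that this is exactly where $m+1$ conditions would break the bound is the right diagnosis and matches the incompatibility half of Theorem~\ref{ncap}.

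Two small presentational points. First, the pruning of $\Gamma$ should be performed \emph{before} defining $\PP$ and the enumeration (the poset and the dense sets $D_\alpha$ must live on the pruned set $\Gamma'$, and an uncountable good subset of $\Gamma'$ is one of $\Gamma$); as written, the pruning appears mid-verification. Second, one should fix $|\Gamma|=\aleph_1$ and an injective enumeration at the outset so that meeting every $D_\alpha$ really yields an uncountable $\bigcup G$, and handle the trivial case $p=\emptyset$ in the density argument. Neither affects correctness.
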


The following result proves the first half of Theorem~\ref{ncap}

\begin{theo}\label{theorem3}
Let $\calF$ be a $(m+1)$-capturing construction scheme. Then $(\bigstar)_m$ fails.
\end{theo}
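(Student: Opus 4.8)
The plan is to refute $(\bigstar)_m$ by producing a single family $\Gamma=\{g_\alpha:\alpha<\omega_1\}\subset\omega^\omega$ every uncountable subfamily of which contains an $(m+1)$-fan, and to read that fan off an application of $(m+1)$-capturing. The guiding picture is that the canonical decomposition of a level-$k$ set $F=\bigcup_{i<n_k}F_i$ is itself an $(m+1)$-fold branching as soon as $n_k\geq m+1$: the root $R(F)$ is the common stem and the parts $F_0,\dots,F_m$ are $m+1$ distinct directions. So I would attach to each $\alpha$ a branch $g_\alpha$ that records, level by level, which part of the relevant $\calF$-set the ordinal $\alpha$ falls into, arranged so that a capture places $m+1$ of the chosen ordinals into the $m+1$ distinct parts of one $F$.

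Concretely, first I would fix for each $\alpha$ an increasing tower $E^\alpha_0\subset E^\alpha_1\subset\cdots$ with $E^\alpha_k\in\calF_k$ and $\alpha\in E^\alpha_k$ (such towers exist: any $\{\alpha\}\subset F$ from clause (1) can be threaded through every intermediate level by Corollary \ref{construction.properties3}, and extended upward indefinitely by enlarging $F$ with one more ordinal), and set $g_\alpha(k)$ to be the index $c<n_k$ with $E^\alpha_{k-1}=(E^\alpha_k)_c$ in the canonical decomposition of $E^\alpha_k$, i.e.\ the part through which the tower descends. Given an uncountable $\Gamma_0\subset\Gamma$, choose uncountable $T\subset\omega_1$ with $\{g_\alpha:\alpha\in T\}\subset\Gamma_0$; by pigeonhole fix a single level $k$ and a single rank $\rho$ so that every $\alpha\in T$ occupies position $\rho$ inside $E^\alpha_k$, and by the $\Delta$-system lemma together with clause (3) (intersections of level-$k$ sets are proper initial segments) thin $T$ so that $(E^\alpha_k)_{\alpha\in T}$ is an increasing $\Delta$-system. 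Applying $(m+1)$-capturing to it yields $F\in\calF_{k'}$, its first $m+1$ parts $F_0,\dots,F_m$, and $\alpha_0<\cdots<\alpha_m$ in $T$ with $E^{\alpha_i}_k\subset F_i$ and $\varphi_i(E^{\alpha_0}_k)=E^{\alpha_i}_k$; since every $\alpha_j$ sits at the common rank $\rho$ and $\varphi_i$ is order preserving, this forces $\varphi_i(\alpha_0)=\alpha_i$, with $\alpha_i\in F_i\setminus R(F)$.

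From here the fan should emerge as follows. Corollary \ref{construction.properties4} says $\varphi_i$ carries $\calF\restriction F_0$ bijectively onto $\calF\restriction F_i$, so the entire local tower of $\alpha_0$ inside $F_0$ is transported by $\varphi_i$ to that of $\alpha_i$ inside $F_i$; hence the coordinates of $g_{\alpha_0},\dots,g_{\alpha_m}$ coming from levels below $F$ coincide, giving agreement on an initial segment. At the level of $F$ the ordinals $\alpha_0,\dots,\alpha_m$ lie in the $m+1$ distinct parts $F_0,\dots,F_m$, so the corresponding part-indices are $0,1,\dots,m$, pairwise distinct, producing the required $m+1$ distinct values at the splitting coordinate. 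This exhibits an $(m+1)$-fan inside $\Gamma_0$, and since $\Gamma_0$ was arbitrary, $(\bigstar)_m$ fails.

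The hard part, and the place where this sketch must be made honest, is the compatibility between the fixed towers defining the $g_\alpha$ and the witness $F$ delivered by capturing: agreement below requires that $\alpha_i$'s own tower inside $F_i$ be exactly $\varphi_i$ of $\alpha_0$'s tower inside $F_0$, while the split requires that $F$ and its parts $F_i$ genuinely appear as tower levels of the $\alpha_i$, so that the index $i$ is literally the value $g_{\alpha_i}$ takes at that coordinate. Since the upward portion of a tower depends on the ambient set and is not isomorphism invariant, I expect one must define $\alpha\mapsto g_\alpha$ off the canonical descent structure (which \emph{is} isomorphism invariant by Corollary \ref{construction.properties4}) rather than off arbitrary towers, and simultaneously run the capture on the tower sets themselves so that $F$ is forced to sit coherently above all the $E^{\alpha_i}_k$. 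The genuinely delicate combinatorial point is the shared root $R(F)$: root ordinals belong to every part at once, so the part-index is ambiguous precisely for them, and one must ensure the captured ordinals $\alpha_i$ (which are non-root in $F$, lying in $F_i\setminus R(F)$) escape this ambiguity at the splitting level while still matching below.
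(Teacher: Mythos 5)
Your overall strategy --- record, coordinate by coordinate, which part of a member of $\calF$ an ordinal falls into, and let one application of $(m+1)$-capturing produce the fan --- is the same as the paper's, but your implementation via arbitrarily chosen towers $E^\alpha_0\subset E^\alpha_1\subset\cdots$ does not work, and the difficulty you flag in your last paragraph is not a technicality to be smoothed over: it is the actual content of the proof, and your proposal does not supply it. Concretely, the capture of the $\Delta$-system $(E^\alpha_k)_{\alpha\in T}$ happens at some level $k'$ by a set $F$ that is \emph{not} a member of anyone's tower: capturing only gives $E^{\alpha_i}_k\subset F_i$ and $\varphi_i(E^{\alpha_0}_k)=E^{\alpha_i}_k$, while the values $g_{\alpha_i}(j)$ for $k<j\le k'$ are computed from the pre-chosen sets $E^{\alpha_i}_j$, over which the capture gives no control whatsoever. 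So there is no reason the functions $g_{\alpha_0},\dots,g_{\alpha_m}$ should agree on the coordinates strictly between $k$ and $k'$, nor that the values $g_{\alpha_0}(k'),\dots,g_{\alpha_m}(k')$ be pairwise distinct; both are needed for the configuration refuting $(\bigstar)_m$. (Agreement below $k$ has a separate, smaller problem: fixing only the rank $\rho$ of $\alpha$ in $E^\alpha_k$ does not fix the descent pattern of the tower, which is ambiguous whenever $\alpha$ sits in a root; but that could be repaired by further pigeonholing on the position pattern of the whole tower inside $E^\alpha_k$.)

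The repair you gesture at (``define $\alpha\mapsto g_\alpha$ off the canonical descent structure'') is exactly what the paper does, and it requires a genuine construction rather than a re-choice of towers: by induction on $l$ one defines $f^F_\alpha$ for \emph{every} $F\in\calF_l$ and every $\alpha\in F$ simultaneously, setting $f^F_\alpha(k)$ equal to a dedicated root-marker value $N_{k-1}$ when $\alpha\in R(F)$, and equal to $N_{k-1}+i+1$ when $\alpha\in F_i\setminus R(F)$, and one proves two coherence properties: invariance under the order isomorphism between any two sets of the same level, and compatibility of restrictions between sets of different levels containing the same $\alpha$. These make $f_\alpha=\bigcup_{F\in\calF}f^F_\alpha$ well defined, which is what eliminates towers (and the root ambiguity) altogether: once $\xi_0<\dots<\xi_m$ are captured by $F\in\calF_k$, level-coherence gives $f_{\xi_i}\restriction k=f^{F_i}_{\xi_i}$, isomorphism invariance together with $\varphi_i(\xi_0)=\xi_i$ gives $f^{F_i}_{\xi_i}=f^{F_0}_{\xi_0}$, and the definition at level $k$ gives the $m+1$ distinct values $N_{k-1}+i+1$ at the splitting coordinate. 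Your proposal stops exactly at the point where this construction and its well-definedness would have to be carried out, so it is incomplete at its crucial step.
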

\begin{proof}
Let $\calF$ be as above. For every $F\in\calF_l$ we construct, inductively on $l$, $(f_\alpha^F:(l+1)\rightarrow N_l)_{\alpha<\omega_1}$
such that
\begin{enumerate}
\item for $E,F\in\calF_l$ and $\varphi:E\rightarrow F$ the increasing bijection between $E$ and $F$, for every $\alpha\in E$,
        if $\beta=\varphi(\alpha)$ then $f^F_\beta=f^E_{\alpha}$.
\item for $E\in\calF_{l_0}$ and $F\in\calF_{l_1}$, $l_0<l_1$, if $\alpha\in E\cap F$ then $f^F_\alpha\restriction(l_0+1)=f_\alpha^E$.
\end{enumerate}

Let $F\in\calF_k$ with canonical decomposition $F=\bigcup_{i<n_k}F_i$ and suppose $(f_\alpha^{F_i}:\alpha\in F_i)$ is defined for all $i<n_k$ satisfying (1) and (2) above. Let $f_\alpha^F=\emptyset$ if $\alpha\notin F$.

For $\alpha\in R(F)$ let $f^F_\alpha(k)=N_{k-1}$ and $f_\alpha^F\restriction k=f_\alpha^{F_0}$.

For $\alpha_0\in F_0\setminus R(F)$ and $\alpha_i=\varphi_i(\alpha)$, $i<n_k$. We let $f^F_{\alpha_i}\restriction k=f^{F_i}_{\alpha_i}$ and
$$ f^F_{\alpha_i}=N_{k-1}+i+1$$
And let $N_k=N_{k-1}+n_k+1$.

It is easy to see that (1) and (2) hold, and so $f_\alpha=\bigcup_{F\in\calF}f_\alpha^F$ is a well defined function.
Then $\Gamma=\{f_\alpha:\alpha<\omega_1\}$ is a witness to the failure of $(\bigstar)_m$.
To see this suppose $\Gamma_0=\{f_\alpha:\alpha\in W\}$ where $W\subset\omega_1$ is uncountable.
Since $\calF$ is $(m+1)$-capturing there are $\xi_0<\ldots<\xi_m$ in $W$ captured by some $F\in\calF_k$.
This implies $f_{\xi_0}\restriction k=\ldots=f_{\xi_m}\restriction k$ and $|\{f_{\xi_0}(k),\ldots,f_{\xi_m}(k)\}|=m+1$, and hence $(\bigstar)_m$ fails as we wanted to show.
\end{proof}

\begin{proof}[Proof of Theorem~\ref{ncap}]
Start by assuming $n\leq m$.
To see $n$-capturing is independent of MA$_{\omega_1}$(K$_m)$, note that any model of MA$_{\omega_1}$
is also a model of MA$_{\omega_1}$(K$_m)$ and contains no $n$-capturing construction scheme for any $2\leq n<\omega$ (see \cite{LT} for $n>2$, and see Proposition~\ref{2cap} of this paper for $n=2$).
Thus, it is consistent to have MA$_{\omega_1}$(K$_m)$ and no $n$-capturing construction schemes.
To show the other direction, start with a model $V$ that has a capturing construction scheme $\calF$.
Let $\KK_m$ be the K$_m$ poset that forces MA$_{\omega_1}$(K$_m)$.
Then $\calF$ remains $m$-capturing on the extension by Lemma~\ref{lem1} hence it is $n$-capturing provided $n\leq m$.

Suppose now $n>m$ and $V$ is a model of MA$_{\omega_1}$(K$_m)$, then $(\bigstar)_m$ holds on $V$.
By Theorem~\ref{theorem3} we know $V$ contains no $(m+1)$-capturing construction scheme,
otherwise $(\bigstar)_m$ fails which is a contradiction.
Thus $V$ has no $n$-capturing construction scheme for $n>m$, as we wanted to show.

To see MA$_{\omega_1}$(precaliber $\aleph_1)$ and capturing are independent we proceed in the same manner.
Any model of MA$_{\omega_1}$ satisfies MA$_{\omega_1}$(precaliber $\aleph_1)$ and has no capturing construction scheme.
Finally, let $V$ be a model that contains a capturing construction scheme.
Let $\KK$ be a forcing notion with precaliber $\aleph_1$ that forces MA$_{\omega_1}$(precaliber $\aleph_1)$.
Since $\KK$ has precaliber $\aleph_1$, $\calF$ remains capturing in the extension.
This finishes the proof.
\end{proof}

It is interesting to find a K$_n$ forcing notion that kills $(n+1)$-capturing in an obvious way.
Suppose $\calF$ is a capturing construction scheme. Let $\calF$ be fixed. 
\begin{defi}
Let $P\in\PP_n$ if $\calF$ does not capture $\bigl\{\{\xi_i\}:i\leq n\bigr\}$ for any $\xi_0<\ldots<\xi_n$ in $P$.
We say $P\leq Q$ if $Q\subset P$.
\end{defi}

\begin{lemma}
$\PP_n$ defined as above is $K_n$.
\end{lemma}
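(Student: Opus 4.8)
The plan is to produce, from an arbitrary uncountable $W\subseteq\PP_n$, an uncountable $n$-linked refinement $W_0$; the common lower bound of any $P^{(1)},\dots,P^{(n)}\in W_0$ will simply be their union $P^{(1)}\cup\dots\cup P^{(n)}$, so the whole point is to arrange that this union is again a condition, i.e.\ that it captures no $n+1$ singletons (note that since $P\leq Q$ means $Q\subseteq P$, the union is in fact the \emph{only} candidate for a lower bound). First I would homogenize $W$. Using clause (1) of Definition~\ref{cons.sch} pick for each $P\in W$ some $G_P\in\calF$ with $P\subseteq G_P$, and refine $W$ to an uncountable subfamily on which all $G_P$ lie in a single level $\calF_k$ and all $|P|$ are equal; by Corollary~\ref{construction.properties4} the restrictions $\calF\restriction G_P$ are then pairwise isomorphic via the maps $\varphi_{G_P,G_Q}$, and since $G_P$ has only finitely many subsets I can refine further so that all the conditions occupy the \emph{same position}, i.e.\ $\varphi_{G_P,G_Q}(P)=Q$ for all $P,Q$. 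Finally I apply the $\Delta$-system lemma to $(G_P)$ to obtain an uncountable increasing $\Delta$-system with root, fixing also how each $P$ meets that root. This is the family $W_0$.

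Now fix $P^{(1)},\dots,P^{(n)}\in W_0$ and suppose toward a contradiction that their union is not a condition: there are $\xi_0<\dots<\xi_n$ in $\bigcup_j P^{(j)}$ and $H\in\calF_l$ with canonical decomposition $H=\bigcup_i H_i$ capturing the singletons, so $\xi_i\in H_i\setminus R(H)$ and $\xi_i=\varphi_i(\xi_0)$ for every $i\le n$. Each $\xi_i$ lies in at least one of the $n$ conditions, so by the pigeonhole principle some condition $P^{(j^*)}$ contains two of the captured copies, say $\xi_a,\xi_b$ with $a<b$. The aim of the main step is to upgrade this single coincidence into a full internal capture: I want to exhibit $n+1$ captured copies all lying inside one condition, contradicting the fact that every element of $\PP_n$, in particular $P^{(j^*)}$, captures no $n+1$ singletons. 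In other words, the reflection argument shows that each condition can meet a captured $(n+1)$-tuple in at most one point, and then the counting $n+1>n$ closes the argument.

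To carry out the reflection I would transport the configuration into $P^{(j^*)}$ along the homogeneity isomorphisms: for each $i$ choose a condition $P^{(j_i)}\ni\xi_i$ and set $\xi_i'=\varphi_{G_{j_i},G_{j^*}}(\xi_i)$, which lies in $P^{(j^*)}$ because $\varphi_{G_{j_i},G_{j^*}}$ carries $P^{(j_i)}$ onto $P^{(j^*)}$. The crux is then to verify that $\xi_0',\dots,\xi_n'$ are themselves captured, and this is where I would exploit the coherence of $\calF$: Lemma~\ref{construction.properties}, Corollary~\ref{construction.properties3}, and especially Lemma~\ref{construction.properties2}, realizing the relevant copies as intervals so that the self-similar structure witnessing $\xi_i=\varphi_i^H(\xi_0)$ survives the transport. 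This reflection is the main obstacle. The delicate points are that the capturing level $l$ is not controlled by the covering level $k$, that a condition $P$ sits inside $G_P$ as an arbitrary subset, and that the piece-maps $\varphi_i^H$ must be reconciled with the homogeneity maps $\varphi_{G_{j_i},G_{j^*}}$ (together with the separate case in which some $\xi_i$ falls in the common root). It is precisely this pigeonhole-plus-reflection mechanism that separates $K_n$ from $K_{n+1}$: with $n+1$ conditions the pigeonhole step fails, no coincidence is forced, and indeed the union is allowed to capture—which is exactly how $\PP_n$ will be used to kill $(n+1)$-capturing.
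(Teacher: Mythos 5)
Your setup is exactly the paper's: cover each condition by some $G_P\in\calF$, refine to a single level $\calF_k$, to equal position (so $\varphi_{G_P,G_Q}(P)=Q$), and to a $\Delta$-system, and observe that the union of $n$ conditions is the only candidate lower bound. But the step you yourself call ``the main obstacle'' is precisely the proof, and it is never carried out; worse, the mechanism you propose does not work. Since all conditions occupy the same position in their covers, two captured points lying in \emph{different} conditions will in general occupy the \emph{same} position there (this is exactly the configuration capturing produces), so your transported points $\xi_i'=\varphi_{G_{j_i},G_{j^*}}(\xi_i)$ can collide: the image need not consist of $n+1$ distinct points, let alone form a tuple for which you can exhibit a capturing witness. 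No construction of such a witness is given, and there is no reason one should exist. So the ``pigeonhole-plus-reflection'' scheme has a hole exactly at its center.

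What the paper does instead requires no transport at all: it splits on the capturing level $l$ versus the covering level $k$ and applies Lemma~\ref{construction.properties} in two different directions. If $l\leq k$, apply the lemma to $H\in\calF_l$ and the cover $G_{j^*}\in\calF_k$ of the condition containing the \emph{largest} captured point $\xi_n$: then $H\cap G_{j^*}\sqsubseteq H$, so all of $\xi_0,\ldots,\xi_n$ lie in $G_{j^*}$, hence (by your positional coherence) in the single condition $P^{(j^*)}$, contradicting $P^{(j^*)}\in\PP_n$ directly --- no pigeonhole and no reflection, since the whole tuple is already there. If $l>k$, pigeonhole does give $\xi_{i_0}<\xi_{i_1}$ in one condition $P^{(j^*)}$, but the contradiction is structural rather than an internal capture: $H_{i_1}\in\calF_{l-1}$ with $l-1\geq k$, so $H_{i_1}\cap G_{j^*}\sqsubseteq G_{j^*}$; since $\xi_{i_1}\in H_{i_1}\cap G_{j^*}$ and $\xi_{i_0}\in G_{j^*}$ with $\xi_{i_0}<\xi_{i_1}$, we get $\xi_{i_0}\in H_{i_1}$, contradicting the disjointness of the pieces $H_i\setminus R(H)$. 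Note what this says about your plan: in the case $l>k$, two captured points in one condition are outright impossible, so the ``full internal capture'' you hoped to reflect to does not exist, and any attempt to build it must instead uncover this contradiction. Your closing intuition --- that pigeonhole over $n$ conditions is where $K_n$ rather than $K_{n+1}$ enters --- is correct, but the argument is completed by these two initial-segment computations, and your proposal is missing both.
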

\begin{proof}
Take $(P_\alpha:\alpha<\omega_1)\subset\PP_n$. 
We can find $D_\alpha\in\calF_{k_\alpha}$ such that $P_\alpha\subset D_\alpha$.

Find $\Gamma\subset\omega_1$ uncountable, and $k<\omega$ such that
\begin{enumerate}
\item $(D_\alpha:\alpha\in\Gamma)$ forms a $\Delta$-System,
\item $k_\alpha=k$ for all $\alpha\in \Gamma$, and
\item for every $\alpha<\beta$ in $\Gamma$, we have $\varphi_{D_\alpha,D_\beta}(P_\alpha)=P_\beta$.
\end{enumerate}

Note that (2) and (3) imply that for all $\alpha<\beta$, $\xi\in D_\alpha\cap D_\beta$, 
then $\xi\in P_\alpha$ if and only if $\xi\in P_\beta$.

We show $(P_\alpha:\alpha\in\Gamma)$ is $n$-linked.
Take $\alpha_0<\ldots<\alpha_{n-1}$ in $\Gamma$.
Let $Q=\bigcup_{i<n}P_i$.
Suppose $\xi_0<\ldots<\xi_n$ are in $Q$ and $F\in\calF_\ell$ captures $\bigl\{\{\xi_i\}:i\leq n\bigr\}$.
Take $F=\bigcup_{i<n_\ell}F_i$ the canonical decomposition of $F$.
We must have 
\begin{equation}\label{eq1}
\begin{aligned}
 \xi_i&\in F_i\setminus R(F)\\
\bigl(F_i\setminus R(F):i<n_l\bigr)&\mbox{ are pairwise disjoint}
\end{aligned}
\end{equation}

Let us get a contradiction. 

\textbf{Case $l\leq k$:}
Let $j\leq n$ with $\xi_n\in P_{\alpha_j}$.
Applying Proposition~\ref{construction.properties}, $F\cap D_{\alpha_j}\sqsubseteq F$.
Therefore $\xi_0,\ldots,\xi_n\in D_{\alpha_j}$
which implies $\xi_0,\ldots,\xi_n\in P_{\alpha_j}$.
But $\calF$ captures $\bigl\{\{\xi_i\}:i\leq n\bigr\}$ and this is a contradiction because $P_{\alpha_j}\in\PP_n$.

\textbf{Case $l>k$:}
There is some $j<n$ and $i_0<i_1\leq n$ such that $\xi_{i_0},\xi_{i_1}\in P_{\alpha_j}$.
Then $F_{i_1}\in\calF_{\ell-1}$, and $F_{i_1}\cap D_{\alpha_j}\sqsubseteq D_{\alpha_j}$ by Proposition~\ref{construction.properties},
but this implies $\xi_{i_0}\in F_{i_1}$.
This contradicts \eqref{eq1} 

We conclude that for every $\xi_0<\ldots\xi_n$ in $Q$, $\calF$ does not capture $\bigl\{\{\xi_i\}:i\leq n\bigr\}$.
Hence $Q\in\PP_n$. It is clear that $Q\leq P_{\alpha_i}$ for $i<n$.
This finishes the proof.
\end{proof}

It is clear that $\PP_n$ kills $(n+1)$-capturing, thus we have an explicit proof that 
MA$_{\aleph_1}($K$_n$) is incompatible with $m$-capturing for $m>n$.

Assume $m>2$ and note that the model obtained in the proof of Theorem~\ref{ncap}, which starts with a capturing construction scheme and then forces $\mbox{MA}_{\omega_1}(\mbox{K}_m)$, shows
the consistency of 
$$ \mbox{MA}_{\omega_1}(\mbox{K}_m) + m\mbox{-capturing } +\neg (m+1)\mbox{-capturing }+\neg\mbox{MA}_{\omega_1}(\mbox{K}_{m-1})$$
this gives us an alternative proof of MA$_{\omega_1}($K$_m)\not\Leftarrow $MA$_{\omega_1}($K$_{m+1}$)
showing that the hierarchy of $m$-Knaster forcing axioms is strict. 

To get that MA$_{\w_1}$ implies there are no $2$-capturing construction schemes, we prove the following:
\begin{prop}\label{2cap}
If $\F$ is $2$-capturing, then $\PP_1$ is c.c.c.
\end{prop}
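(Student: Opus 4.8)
The plan is to argue by contradiction, mirroring the proof that $\PP_n$ is $K_n$ but adjoining a folding step that only $2$-capturing makes available. Suppose $\{P_\alpha:\alpha<\omega_1\}\s\PP_1$ is an uncountable antichain. Each $P_\alpha$ is finite, so by property~(1) I fix $D_\alpha\in\calF_{k_\alpha}$ with $P_\alpha\s D_\alpha$ and then thin the family so that $k_\alpha=k$ is constant, $(D_\alpha)$ is an increasing $\Delta$-system, and $\varphi_{D_\alpha,D_\beta}(P_\alpha)=P_\beta$ for all $\alpha<\beta$ (using Corollary~\ref{construction.properties4}). Exactly as in the $K_n$ argument this forces $(P_\alpha)$ itself to be an increasing $\Delta$-system; let $p$ be its root and $t_\alpha=P_\alpha\setminus p$ its tails, so $t_\alpha<t_\beta$ and $\varphi_{D_\alpha,D_\beta}$ matches $t_\alpha$ with $t_\beta$ position by position. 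Since $R\leq Q$ means $Q\s R$, two conditions are compatible iff their union lies in $\PP_1$; as each $P_\alpha$ is capture-free, $P_\alpha$ and $P_\beta$ are incompatible precisely when $\calF$ captures some pair $\xi<\zeta$ with $\xi\in t_\alpha$ and $\zeta\in t_\beta$.

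Next I would feed the $\Delta$-system $(P_\alpha)_\alpha$ into $2$-capturing to obtain $\alpha_0<\alpha_1$ and $F\in\calF_\ell$, with canonical decomposition $F=\bigcup_{i<n_\ell}F_i$, such that $p\s R(F)$, $t_{\alpha_0}\s F_0\setminus R(F)$, $t_{\alpha_1}\s F_1\setminus R(F)$, and $\varphi:=\varphi_1^F\colon F_0\to F_1$ is the order isomorphism carrying $P_{\alpha_0}$ onto $P_{\alpha_1}$. By the antichain hypothesis $P_{\alpha_0}$ and $P_{\alpha_1}$ are incompatible, so I fix a captured cross-pair $\xi<\zeta$ with $\xi\in t_{\alpha_0}$, $\zeta\in t_{\alpha_1}$, witnessed by some $G\in\calF_m$ with $\xi\in G_0\setminus R(G)$, $\zeta\in G_1\setminus R(G)$ and $\varphi_1^G(\xi)=\zeta$.

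The heart of the argument is to fold this cross-pair back inside $P_{\alpha_0}$ using the symmetry $\varphi$. Put $\zeta'=\varphi^{-1}(\zeta)\in t_{\alpha_0}$; the goal is to show that $\calF$ captures $\{\xi,\zeta'\}$, a pair lying entirely in $P_{\alpha_0}$, contradicting $P_{\alpha_0}\in\PP_1$. To compare $G$ with $F$ I would apply Lemma~\ref{construction.properties} to $\xi,\zeta\in F\cap G$: when $\ell<m$ one gets $F_0\s G_0$ (via Corollary~\ref{construction.properties3}, using $\xi\in(F_0\cap G_0)\setminus R(G)$), so that $\xi$ and $\zeta'$ both sit in $G_0$; transporting the copy map $\varphi_1^G$ across $\varphi$ by Corollary~\ref{construction.properties4} and using Lemma~\ref{construction.properties2} to realize the required interval-copy then produces a member of $\calF$ capturing $\{\xi,\zeta'\}$ within $F_0$. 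The symmetric case $m\leq\ell$ forces $G_0\s F_0$ and is handled the same way.

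The step I expect to be the real obstacle is the degenerate alignment $\zeta=\varphi(\xi)$, in which $\xi$ and $\zeta$ occupy \emph{matching} positions of the two tails. Then $\zeta'=\xi$, the fold collapses, and moreover $F$ itself already captures this corresponding pair, so the incompatibility of $P_{\alpha_0}$ and $P_{\alpha_1}$ is automatic and extracts no information from the antichain. The extreme form of this difficulty is the singleton case, where the antichain degenerates to an uncountable set all of whose pairs are captured, i.e.\ an uncountable clique in the graph of captured pairs; there is then no multi-element condition whose capture-freeness could be violated by a fold. Ruling out such ``corresponding'' antichains is exactly where $2$-capturing must be used essentially (rather than mere Knasterness, which genuinely fails here): the plan is to feed suitably chosen derived $\Delta$-systems into $2$-capturing so as to force matching-position captures to propagate into a configuration forbidden by the construction-scheme axioms. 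Making this rigorous, together with the level bookkeeping that justifies the fold, is where I expect the essential work to lie.
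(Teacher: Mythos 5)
You correctly identify the obstacle, but you do not overcome it, and it is not a side case: the ``corresponding positions'' configuration is the entire difficulty of Proposition~\ref{2cap}. If the uncountable antichain consists of singletons $P_\alpha=\{\eta_\alpha\}$ --- i.e.\ every pair $\{\eta_\alpha,\eta_\beta\}$ is captured --- then feeding $(P_\alpha)$ into $2$-capturing yields $F$ capturing $(\{\eta_{\alpha_0}\},\{\eta_{\alpha_1}\})$, every cross-capture is automatically at corresponding positions, your fold collapses ($\zeta'=\xi$), and your argument produces no contradiction whatsoever. So, as you concede, the proposal proves nothing in exactly the case that matters. Moreover, even in the non-degenerate case the fold step is unjustified: capturing of $\{\xi,\zeta\}$ is witnessed by the canonical decomposition of a specific $G\in\F_m$, and nothing in Lemma~\ref{construction.properties}, Corollary~\ref{construction.properties3}, or Corollary~\ref{construction.properties4} transports that witness across $\varphi^{-1}$ to a witness capturing $\{\xi,\zeta'\}$; you would have to construct one, and there is no reason it exists.

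The paper's proof is built precisely to manufacture the asymmetry your approach lacks, by applying $2$-capturing \emph{twice}, at two different scales. First, recursively, it produces uncountably many $D_\al\in\F_k$ such that each $D_\al$ captures a pair $(P_{\al'},P_{\al''})$ of antichain elements; this pins $P_{\al'}\setminus p$ inside the $0$th component $(D_\al)_0\setminus R(D_\al)$ and $P_{\al''}\setminus p$ inside the $1$st. Second, it applies $2$-capturing to the $\Delta$-system $(D_\al)$ itself, obtaining $F$ capturing $(D_\al,D_\be)$, and then shows that the \emph{mixed} union $P_{\al'}\cup P_{\be''}$ lies in $\PP_1$, hence is a common extension of $P_{\al'}$ and $P_{\be''}$, contradicting incompatibility. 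The point of mixing is that a capture $E\in\F_l$ of some $\xi_0\in P_{\al'}$, $\xi_1\in P_{\be''}$ is structurally impossible: for $l\leq k$, Lemma~\ref{construction.properties} gives $E\cap D_\be\sqsubseteq E$, forcing $\xi_0\in D_\be$ and contradicting $\xi_0\in D_\al\setminus D_\be$; for $l>k$, the map $\varphi_{E_0,E_1}$ together with the fact that same-level intersections are initial segments of both sets forces $\xi_0$ and $\xi_1$ to occupy $0$th-component positions simultaneously, while by construction $\xi_1$ sits in the $1$st component of $D_\be$. Thus corresponding-position captures of the mixed set are excluded by the construction-scheme axioms themselves, not by a fold. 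A single application of capturing to $(P_\alpha)$, as in your proposal, cannot create this $0$th-versus-$1$st asymmetry, which is why your argument stalls exactly where you predicted it would.
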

\begin{proof}
Suppose $(P_\alpha:\alpha<\omega_1)\subset\PP_1$ forms an uncountable antichain, and refine this family so that it forms a $\Delta$-system. Since $\F$ is $2$-capturing, we can recursively construct a family $(D_\alpha:\alpha\in \Gamma)\s\F$ and refine it so that $(D_\alpha:\alpha\in\Gamma)\s\F_k$ forms an uncountable $\Delta$-System, and for $\al\in\Gamma$, $D_\alpha$ captures some $(P_{\al'},P_{\al''})$. Again, since $\F$ is $2$-capturing, there are some $F\in\F$, $\al<\be\in\Gamma$, such that $F$ captures $(D_{\al},D_{\be})$. 

We claim that $P_{\al'}\cup P_{\be''}\in \PP_1$, which finishes the proof with a contradiction. Suppose $\xi_0<\xi_1\in P_{\al'}\cup P_{\be''}$ are captured by some $E\in\F_l$. Note that since $P_{\al'}, P_{\be''}\in \PP_1$, $\xi_0\in P_{\al'}\setminus P_{\be''},\ \xi_1\in P_{\be''}\setminus P_{\al'}$, and so $\xi_0\in D_{\al}\setminus D_{\be},\ \xi_1\in  D_{\be}\setminus D_{\al}$. Let $E=\bigcup_{i<n_l}E_i$, $D_{\be}=\bigcup_{i<n_k}(D_{\be})_i$, $D_{\al}=\bigcup_{i<n_k}(D_{\al})_i$ be the respective canonical decompositions.

\textbf{Case $l\leq k$:}
Applying Proposition~\ref{construction.properties}, $E\cap D_{\be}\sqsubseteq E$.
Therefore $\xi_1\in E\cap D_{\be}$ gives $\xi_0\in D_{\be}$, and this is a contradiction.

\textbf{Case $l>k$:}
Recall that $\phi_{E_0,E_1}(\F\restriction E_0)=\F\restriction E_1$, and $D_\al$ capturing $(P_{\al'},P_{\al''})$ implies $\xi_0\in (D_\al)_0\setminus R(D_\al)$. Since there is some $E'\s E$ with $\xi_0\in E'\in \F_{k}$, and $\xi_0\in (D_\al)_0\in \F_{k-1}$, we get that $\xi_0$ must be in the $0$'th component of the canonical decomposition of $E'$, and hence $\phi_{E_0,E_1}(\xi_0)=\xi_1$ must be in the $0$'th component of the canonical decomposition of some element in $\F_k\restriction E_1$, which contradicts $\xi_1\in(D_\be)_1\setminus R(D_\be)$.

\end{proof}

\section{Capturing Construction Schemes in the Cohen Model}\label{cohen}
We dedicate this section to the proof of the following result.

\renewcommand{\thetheo}{\ref{ch5.cohen}}
\begin{theo}
Adding $\kappa\geq\aleph_1$ Cohen reals also adds a capturing construction scheme. 
\end{theo}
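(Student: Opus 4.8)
The plan is to force a capturing construction scheme by a finite-approximation argument, exploiting the well-known fact that adding $\kappa\geq\aleph_1$ Cohen reals is the same as adding them by the finite-support product (or iteration) $\mathrm{Fn}(\kappa\times\omega,2)$, and more usefully that this forcing is \emph{countably closed$\times$Cohen}-absorbing in the following sense: any construction one can carry out over a single Cohen real, coherently along an $\omega_1$-indexed skeleton, will succeed. First I would fix a type $(m_k,n_k,r_k)_{k<\omega}$ and recall from the remarks after Definition~\ref{cons.sch} that in $V$ there is a construction scheme $\calF^\omega$ of that type living on $\omega$ (cofinal in $[\omega]^{<\omega}$, with levels $\calF^\omega_k$, roots, and canonical decompositions satisfying (1)--(4)). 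The idea is that $\calF^\omega$ provides the local combinatorial template: every finite piece of the scheme we want to build on $\omega_1$ will be an order-isomorphic copy of some $L\in\calF^\omega$, so Corollaries~\ref{construction.properties3} and~\ref{construction.properties4} transfer verbatim, and the only thing the generic must do is glue these templates coherently and arrange capturing.

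Next I would set up the forcing explicitly rather than relying on a black box. Let $\PP$ be the poset of finite approximations to a construction scheme on $\omega_1$: a condition $p$ specifies a finite subset $a_p\subset\omega_1$, a level-label, and a finite initial segment $\calF^p$ of a scheme on $a_p$ isomorphic (via the order isomorphism $a_p\to |a_p|\subset\omega$) to $\calF^\omega\restriction$(some $L$), together with finitely much Cohen-coding information attached to the points of $a_p$; extension refines by enlarging $a_p$ and the scheme compatibly. One checks $\PP$ is (isomorphic to a dense subset of) $\mathrm{Fn}(\kappa\times\omega,2)$, i.e.\ it is Cohen forcing, because it is a countable atomless separative poset when $\kappa=\aleph_1$ and a finite-support product of such when $\kappa$ is larger; this is the step that legitimizes calling the result ``the Cohen model.'' The genericity over density arguments then gives property (1) (cofinality: for each finite $A\subset\omega_1$ the set of conditions with $A\subset a_p$ covered by a single scheme-element is dense) and ensures the levels, sizes $m_k,r_k$, and $\Delta$-system structure of (2)--(4) hold, since each is enforced by the isomorphism to $\calF^\omega$.

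The heart of the proof, and the step I expect to be the main obstacle, is verifying capturing: given in $V[G]$ an uncountable $\Delta$-system $(s_\xi)_{\xi<\omega_1}$ with root $s$, I must produce $\xi_0<\dots<\xi_{n-1}$ and $F\in\calF$ capturing them, for every $n$. The strategy is a name-reflection / fusion argument. By a $\Delta$-system and pigeonhole refinement on the \emph{names} for the $s_\xi$ together with their finite Cohen supports, I would find an uncountable subfamily whose conditions are pairwise isomorphic over a common stem and whose supports themselves form a $\Delta$-system in $\kappa$; crucially, because Cohen forcing has precaliber $\aleph_1$ (indeed is $\sigma$-centered) one extracts an uncountable \emph{centered} subfamily, so finitely many of the relevant conditions are simultaneously compatible. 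Then, using the homogeneity of $\calF^\omega$ (Corollary~\ref{construction.properties4}: any $n$ isomorphic copies of a small scheme-element sitting as a $\Delta$-system with a common root embed into a single larger element $L\in\calF^\omega_k$ as its canonical decomposition $L=\bigcup_{i<n_k}L_i$), I would build a single condition $q\leq$ (the finitely many chosen conditions) that places the chosen $s_{\xi_i}$ as $\varphi_i$-images of $s_{\xi_0}$ inside the $F_i\setminus R(F)$ of a scheme-element $F$ it creates, with $s\subset R(F)$. Because $q$ forces exactly the capturing configuration of the Definition, genericity finishes it. The delicate points are: matching the Cohen coordinates so that the template isomorphism $\varphi_i$ simultaneously respects the scheme structure \emph{and} the already-decided values of the $s_{\xi}$'s (this is where one must be careful that the $\Delta$-system root $s$ is absorbed into $R(F)$ and that the ``new'' parts are genuinely free for the generic to arrange), and doing the refinement uniformly in $n$ so that the \emph{single} generic scheme captures every $\Delta$-system for every $n$ at once. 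I expect to handle ``for every $n$'' by noting there are only $\aleph_1$ many $\Delta$-systems to consider via nice-name counting under CCC and density, so a single generic meets all the relevant dense sets.
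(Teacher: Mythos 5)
Your overall strategy---fix a ground-model scheme $\calF^\omega$ on $\omega$, transfer it generically onto $\omega_1$, and verify capturing by a $\Delta$-system refinement of the conditions followed by an amalgamation that forces the capturing configuration---is the same as the paper's, and that part of the sketch is sound. But there are two genuine gaps. First, the crux of the theorem is that the forcing employed is \emph{equivalent to Cohen forcing}, and your justification of this fails. Your poset of finite approximations, whose conditions carry a scheme structure on an arbitrary finite $a_p\subset\omega_1$, has $\aleph_1$ many conditions, so it is certainly not ``a countable atomless separative poset when $\kappa=\aleph_1$'' (a countable atomless poset adds a single Cohen real, not $\aleph_1$ many), and it is not visibly a finite-support product of countable posets either, because the scheme structure ties all the points of $a_p$ together instead of splitting into independent coordinates. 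This is precisely the difficulty the paper's definition of $\PP$ is engineered to avoid: there a condition assigns to finitely many \emph{limit ordinals} $\delta$ a pair $(D^p_\delta,a^p_\delta)$ with $D^p_\delta\in\calF^\omega$, and the scheme on $\omega_1$ is only read off afterwards via the transfer maps $\Phi^p$; with that bookkeeping $\PP$ is a dense suborder of a finite-support product of countable posets, hence literally $\CC_{\omega_1}$. You would need either to redesign your poset along these lines or to give an actual proof that the completion of your poset is the Cohen algebra; as written, the step is asserted with a false reason.

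Second, the case $\kappa>\aleph_1$ is not handled: your poset concerns a scheme on $\omega_1$ and has size $\aleph_1$, so it cannot be dense in $\mathrm{Fn}(\kappa\times\omega,2)$ for $\kappa>\aleph_1$. The paper treats this case by factoring $\CC_\kappa=\CC_{\omega_1}\ast\CC_{\kappa\setminus\omega_1}$ and invoking its preservation lemma (Lemma~\ref{lem1}): capturing is preserved by precaliber $\aleph_1$ forcing. This step cannot be skipped, since capturing quantifies over all uncountable $\Delta$-systems and new ones appear in the further extension. A smaller but symptomatic point: your plan to handle ``for every $n$'' by counting nice names is both incorrect (there are $2^{\aleph_1}$, not $\aleph_1$, nice names for uncountable subsets of $\omega_1$) and unnecessary---one proves, for every name $\dot S$, every $n$, and every condition, that some extension forces an $n$-capture of $\dot S$, and genericity meets all ground-model dense sets regardless of their number. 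Likewise, extracting a centered subfamily via precaliber $\aleph_1$ is beside the point: compatibility of the finitely many chosen conditions is not the obstacle; what matters is that a common extension can be chosen to \emph{create} the capturing configuration, which is where the special property \eqref{omegaCS} of $\calF^\omega$ and Lemma~\ref{construction.properties2} do the real work in the paper.
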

\renewcommand{\thetheo}{\thesection.\arabic{theo}}
\addtocounter{theo}{-1}

\begin{proof}
Assume first that $\kappa=\aleph_1$. 
We start by fixing $\calF^\omega$, a construction scheme on $\omega$ with the following property:
\begin{equation}\label{omegaCS}
\mbox{\begin{minipage}{.9\textwidth}
For every $A\subset\omega$ finite and $a<\omega$ there is $F\in\calF$ with canonical decomposition
$\bigcup_{i<n_k}F_i$, such that $A\subset F_0$ and $R(F)=F_0\cap a$.
\end{minipage}}
\end{equation}

\begin{defi}
Let $p\in\PP$ if and only if $\supp({p})\subset\omega_1$ finite, for every 
$\delta\in\supp(p)$, $\delta$ is limit, $p(\delta)=(D^p_\delta,a^p_\delta)$ where
$D^p_\delta\in\calF^\omega$, $a^p_\delta\in D^p_\delta$, and for every $\delta_0<\delta_1$ in $\supp(p)$
\begin{enumerate}
\item $D^p_{\delta_0}\subseteq D^p_{\delta_1}$, and 
\item $a^p_{\delta_0}<a^p_{\delta_1}$
\end{enumerate}

We say ${p}\leq{q}$ if $\supp({q})\subset\supp({p})$, 
\begin{enumerate}[label=(\roman*)]
\item for every $\delta<\delta'\in\supp({q})$, $a^p_{\delta'}-a^p_{\delta}\geq a^q_{\delta'}-a^q_{\delta}$, and 
\item for every $\delta\in\supp({q})$ with $D^q_\delta\in\calF_k$,
there is $W\in\calF_k$ with $W\cap a^p_\delta$ having the same size that $D_\delta^q\cap a^q_\delta$, and
 $W\setminus a^p_\delta$ is an interval of $D^p_\delta$ with $a^p_\delta\in W$.
\end{enumerate}

We say ${p}\sim{q}$ if ${p}\leq{q}$ and ${q}\leq{p}$.
\end{defi}

Note that $\PP$ is equivalent to the forcing $\CC_{\omega_1}$ for adding $\omega_1$ Cohen reals. To see this, notice that $\PP$ is a dense suborder of the partial order which is defined as above minus conditions (1), (2), or (i), and this partial order is finite support product of countable partial orders.

Now for $a<\delta<\omega_1$, define the function $\phi_{a,\delta}:\omega\rightarrow\omega_1$ by
$$\phi_{a,\delta}(\alpha)=\begin{cases} \alpha \quad   &\alpha< a \\
                                        \delta+i \quad &\alpha=a+i
											    \end{cases}$$
Note that, for $\varphi:\omega\rightarrow\omega$ increasing we have 
\begin{center}
\begin{tikzpicture}
\node (a) at (-2,2) {$\omega$};
\node (x) at (2,2) {$\omega_1$};
\node (b) at (-2,0) {$\omega$};
\draw[->] (a)--node[left] {$\varphi$}(b);
\draw[->] (a)--node[above] {$\phi_{a,\delta}$}(x);
\draw[->] (b)to node[below] {$ \quad\phi_{\varphi(a),\delta}$}(x);
\end{tikzpicture}
\end{center}
\begin{equation}\label{cohen.cap.eq1}
\phi_{\varphi(a),\delta}\circ\varphi=\phi_{a,\delta}
\end{equation}

Now let $p\in\PP$, with $\supp(p)=(\delta_0<\ldots<\delta_n)$ suppose $p(\delta_i)=(D_i,a_i)$. 
We define $\Phi^p:D_n\rightarrow\omega_1$ as:
$$
\Phi^q(x)=\begin{cases} x & x<a_0 \\
           \phi_{a_i,\delta_i}(x)& a_i\leq x<a_{i+1} \\
					 \phi_{a_n,\delta_n}(x) & x\geq a_n
          \end{cases}
$$
Finally, for $p$ as above we define $$\bff_p=\Phi^p\Bigl(\calF^\omega\restriction D_{n}\Bigr).$$ It is left as an exercise to check that $q\leq p\in \PP$ implies $\bff_p\s\bff_q$.

Let $G\subset\PP$ be a generic filter, we define $\calF$ in $V[G]$ as
$$\calF=\bigcup_{p\in G}\bff_p$$

\begin{claim}
Given $ p\in\PP$ and $\xi<\omega_1$ there is $q\leq p$ and $x<\omega$ such that $\Phi^q(x)=\xi$
\end{claim}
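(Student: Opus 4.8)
The plan is to prove this density fact by finding, for a given condition $p$ and ordinal $\xi<\omega_1$, an extension $q\le p$ whose support includes a limit ordinal $\delta$ below $\xi$ whose associated block under $\Phi^q$ covers $\xi$. Let me first understand what $\Phi^q(x)=\xi$ demands: recalling the definition of $\Phi^q$, the value $\xi$ is attained when there is some $\delta_i\in\supp(q)$ with $a_i\le x < a_{i+1}$ (or $x\ge a_n$) such that $\phi_{a_i,\delta_i}(x)=\delta_i + (x-a_i)=\xi$. So I want a support element $\delta\in\supp(q)$, a coordinate $a=a^q_\delta$, and a point $x=a+(\xi-\delta)\in D^q_\delta$ realizing the equation, which forces $\delta\le\xi$ and $\xi-\delta<$ (the length of $D^q_\delta$ above $a^q_\delta$).

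First I would handle the trivial case: if $\xi$ already lies in the range of $\Phi^p$, take $q=p$. Otherwise, pick a limit ordinal $\delta$ with $\max(\supp(p))<\delta\le\xi$ and $\delta\le\xi$, say simply $\delta$ the largest limit ordinal $\le\xi$ (so $\xi-\delta<\omega$). I intend to extend $p$ to $q$ by adding a single new coordinate $\delta$ to the top of the support, setting $D^q_\delta$ to be a level-set from $\calF^\omega$ chosen so that $D^{p}_{\max\supp p}\subseteq D^q_\delta$ (consistency with clause (1) of the definition of $\PP$) and large enough that, after choosing $a^q_\delta$ above $a^p$ of the previous top coordinate (clause (2)), the point $x=a^q_\delta+(\xi-\delta)$ genuinely lies in $D^q_\delta$. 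Here property \eqref{omegaCS} of $\calF^\omega$ is exactly the tool I need: given the finite set $A=D^{p}_{\max\supp p}$ and a bound $a$, it produces $F\in\calF^\omega$ with $A\subset F_0$ and $R(F)=F_0\cap a$, so I can pack the old data into $F_0$ and still have room above a chosen threshold $a^q_\delta$ to locate $x$. I would set $D^q_\delta=F$, let $a^q_\delta$ be a point of $F$ above everything used so far, and verify $x=a^q_\delta+(\xi-\delta)\in F$ by taking $F$ high enough in the scheme; then $\Phi^q(x)=\phi_{a^q_\delta,\delta}(x)=\delta+(x-a^q_\delta)=\delta+(\xi-\delta)=\xi$, as desired.

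The remaining work is to check $q\in\PP$ and $q\le p$. Membership in $\PP$ requires $\delta$ limit (arranged), $D^q_\delta\in\calF^\omega$ and $a^q_\delta\in D^q_\delta$ (arranged), and the monotonicity clauses (1)--(2) across the support; since $\delta$ is the new top element and $D^{p}_{\max\supp p}\subseteq D^q_\delta$ with $a^q_\delta$ chosen above the previous coordinates, both hold. For $q\le p$ I must verify clauses (i) and (ii): on the old support $q$ agrees with $p$, so (i) for old pairs and the differences $a^q_{\delta'}-a^q_\delta$ are unchanged, and (ii) is about the old coordinates $\delta\in\supp(p)$, which is untouched.

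The main obstacle I expect is the bookkeeping in clause (ii) and \eqref{omegaCS}: I must arrange $a^q_\delta$ and the new level set $F$ so that the root condition $R(F)=F_0\cap a$ absorbs exactly the copy of the old top set and so that $x$ lands in the interval of $F$ sitting above $a^q_\delta$, all while respecting the interval/root structure that Lemma~\ref{construction.properties2} guarantees can be realized inside $\calF^\omega$. Choosing the parameters in the right order (first $\delta$, then the level $k$ large enough that $|F|$ exceeds $\xi-\delta$ beyond $a^q_\delta$, then $a^q_\delta$, then reading off $x$) is the delicate step, but no genuinely hard idea is needed beyond the cofinality property \eqref{omegaCS} of $\calF^\omega$.
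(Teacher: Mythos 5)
There is a genuine gap: your case analysis is incomplete, and the cases you omit are precisely the ones that carry all the difficulty. After discarding the trivial case $\xi\in\mathrm{ran}(\Phi^p)$, you assume that the largest limit ordinal $\delta\leq\xi$ lies above $\max(\supp(p))$, so that $\delta$ can be added as a \emph{new top} coordinate. But nothing forces this. If $\xi=\delta_j+\ell$ for some $\delta_j\in\supp(p)$ with $a_j+\ell\notin D^p_n$ (or $a_j + \ell$ not in the correct block), then $\xi\notin\mathrm{ran}(\Phi^p)$ yet $\delta=\delta_j$ is already in the support; and if $\delta\notin\supp(p)$ but $\delta<\delta_j$ for some $j$, then the new coordinate must be inserted \emph{below} existing ones. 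In both situations "adding a single new coordinate to the top" is not even well defined: clause (2) of membership in $\PP$ forces the $a$'s to be ordered like the $\delta$'s, and clause (1) forces the $D$'s to be nested accordingly, so realizing $\xi$ requires modifying or displacing every coordinate $\delta_i$ with $\delta_i\geq\delta$. This is exactly what the paper's proof does: in its Case 1 ($\delta=\delta_j\in\supp(p)$) it replaces $D_i$ by a common $F\in\calF^\omega$ for all $i\geq j$ and relocates the $a_i$'s to copies $a_i^*$ inside $F$, and in its Case 2 ($\delta<\delta_j$, $\delta\notin\supp(p)$) it additionally inserts $(F,a)$ in the middle; in both cases clause (ii) of the extension relation is witnessed by sets $W_i$ produced from Lemma~\ref{construction.properties2} applied inside $F_0$. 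Your proposal handles only the subcase the paper explicitly dismisses as easy ($\delta>\delta_n$, "we leave the reader to work out the details"), so the essential content of the claim is missing.

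A secondary, fixable issue: even in your top-extension case, "taking $F$ high enough in the scheme" does not guarantee $x=a^q_\delta+\ell\in F$. Since $\phi_{a,\delta}$ is defined by literal integer offsets ($a+i\mapsto\delta+i$), the specific natural numbers $a^q_\delta+1,\ldots,a^q_\delta+\ell$ must belong to $F$, and a set in $\calF^\omega$ of large level need not contain any prescribed consecutive run. The correct use of \eqref{omegaCS} is the paper's: choose $a>D_n$ first and feed the whole run into the hypothesis, i.e.\ take $A=D_n\cup\{a,a+1,\ldots,a+\ell\}$, so that the run sits inside $F_0$ by fiat.
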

\begin{proof}

Let $\xi<\omega_1$ and $ p\in\PP$.
We want to find $q\leq p$ and $x<\omega$ such that $\xi=\Phi^q(x)$.

Take $\delta<\omega_1$ limit such that $\delta\leq\xi<\delta+\omega$.
We write $\xi=\delta+\ell$ where $\ell<\omega$.
Consider $\supp( p)=\{\delta_0<\ldots<\delta_n\}$ and $ p(\delta_i)=(D_i,a_i)$ with $D_i\in\calF^\omega_{k_i}$.

\textbf{Case 1:} $\delta=\delta_{j}$ for some $j\leq n$.
Find $F\in\calF^\omega$ with canonical decomposition $\bigcup_{i<n_k}F_i$, such that $D_n\cup\{a_j+1,\ldots,a_j+\ell\}\subset F_0$ and $R(F)=F_0\cap a_j$.
For every $i\geq j$, Apply Lemma~\ref{construction.properties2} to $D_i$, $F$, and $a_i$,
to find $W_i\in\calF_{k_i}$ such that $|W_i\cap a_i|=|D_i\cap a_i|$ and $W_i\setminus a_i$ is an interval of $F_0$ 
with $a_i\in W_i$. Note that for every $i\geq j$, $a_i \in F_0\setminus R(F)$, and so $W_i^*=\phi_1(W_i)$, $a_i^*=\phi_1(a_i)$, have that $|W_i^*\cap a_i^*|=|D_i\cap a_i|$ and $W_i^*\setminus a_i^*$ is an interval of $F$.
Define $ q\in\PP$ so that $\supp( q)=\supp( p)$ and 
$$ q(\delta_i)=\begin{cases}(D_i,a_i) &\mbox{for }i<j, \\
               (F,a_i^*) &\mbox{for }i\geq j.
							\end{cases}$$
Note that the $W_i$'s witness $ q\leq p$. By construction $\Phi^q(a_i+\ell)=\xi$.

\textbf{Case 2:} $\delta\notin\supp( p)$.
If $\delta>\delta_n$ it is easy to find $ q$, we leave the reader to work out the details.
Assume then there is $j<n$ with $\delta<\delta_{j}$.
Pick $a>D_n$ and apply \eqref{omegaCS} to find $F\in\calF^\omega$ with canonical decomposition $\bigcup_{i<n_k}F_i$, 
$D_n\cup\{a,a+1,\ldots,a+\ell\}\subset F_0$ and $R(F)=F_0\cap a_{j}$.
Apply Lemma~\ref{construction.properties2} to find $W_i^*\in\calF^\omega_{k_i}$ for $i\geq j$,
such that $|W_i^*\cap a_i|=|D_i\cap a_i|$, $W_i$ is an interval of $F$ with $a_i\in W^*_i$, for $i\geq j$.
Now let $c_i=\varphi_{1}(a_i)$, and $W_i=\varphi_1(W_i^*)$ for $i\geq j$.

Define $ q\in\PP$ with $\supp( q)=\supp( p)\cup\{\delta\}$ such that
$$ q(\gamma)=\begin{cases} (D_i,a_i) &\mbox{for }\gamma=\delta_i, i< j.\\
                           (F,a)     &\mbox{for }\gamma=\delta, \\
													 (F,c_i)   &\mbox{for }\gamma=\delta_i, i\geq j.
						\end{cases}
$$
It's clear that $ q\leq p$ (this is witness by the $W_i$'s) and $\Phi^q(a+\ell)=\xi$ by construction.
\end{proof}

\begin{claim}
$\calF$ as above is a construction scheme on $V[G]$.
\end{claim}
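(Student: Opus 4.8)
The goal is to verify that $\calF=\bigcup_{p\in G}\bff_p$ satisfies all four clauses of Definition~\ref{cons.sch}. The plan is to leverage the fact that each $\bff_p=\Phi^p(\calF^\omega\restriction D_n)$ is a faithful order-preserving image of an initial segment of the ground-model construction scheme $\calF^\omega$ on $\omega$, so that most structural properties transfer automatically from $\calF^\omega$; the only genuinely new content is cofinality, which is exactly what the previous Claim was built to provide.

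First I would establish that $\calF$ is well-defined and that levels are well-defined. The key observation is that each map $\Phi^p$ is strictly increasing on $D_n$ (this follows from clause~(2) in the definition of $\PP$, forcing $a^p_{\delta_0}<a^p_{\delta_1}$, together with the definition of the $\phi_{a,\delta}$ glued along the $a_i$). Consequently $\Phi^p$ sends each $F\in\calF^\omega\restriction D_n$ to a set of the same cardinality, and one assigns the level of $\Phi^p(F)$ in $\calF$ to be the level of $F$ in $\calF^\omega$. I would verify this is consistent across conditions: since $q\leq p$ implies $\bff_p\s\bff_q$ (the stated exercise) and both arise as $\Phi$-images, a generic-filter/genericity argument (or directly the compatibility of the $\Phi^q$ via \eqref{cohen.cap.eq1}) shows that a given $L\in\calF$ receives the same level no matter which $p\in G$ witnesses $L\in\bff_p$. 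This gives clause~(2), with $R(L):=\Phi^p(R(F))$ for $L=\Phi^p(F)$, since $\Phi^p$ is order preserving and hence sends an initial segment to an initial segment.

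Next I would handle clauses~(3) and~(4), which are the internal structural axioms. The idea is that for any two elements $L,L'\in\calF$ one can find a single $p\in G$ with $L,L'\in\bff_p$ (by directedness of $G$ and $\bff_p\s\bff_q$ for $q\le p$), so both are $\Phi^p$-images of members of $\calF^\omega\restriction D_n$. Then clause~(3) (if $L,L'$ are on the same level, $L\cap L'\sqsubset L,L'$) and clause~(4) (canonical decomposition into a $\Delta$-system with root $R(L)$) follow by transporting the corresponding properties of $\calF^\omega$ through the order-isomorphism $\Phi^p$: since $\Phi^p$ is an order isomorphism onto its image, it preserves the relation $\sqsubset$, preserves intersections, preserves $\Delta$-systems and their roots, and preserves the increasing arrangement of the pieces. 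Here I would invoke Corollary~\ref{construction.properties4} in the form already available for $\calF^\omega$ to guarantee that the canonical decomposition of $F$ inside $D_n$ is exactly the restriction $\calF^\omega\restriction F$, so its image is the canonical decomposition of $L$ inside $\calF$.

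Finally, clause~(1) — cofinality, that every finite $A\subset\omega_1$ is covered by some $F\in\calF$ — is where the real work lies, and it is precisely the payoff of the preceding Claim. The plan is: given finite $A=\{\xi_0,\dots,\xi_{t}\}\subset\omega_1$ and any $p\in G$, apply the Claim repeatedly (threading compatibility along decreasing conditions) to obtain $q\leq p$ in $\PP$ and $x_0,\dots,x_t<\omega$ with $\Phi^q(x_j)=\xi_j$; then use property~\eqref{omegaCS} of $\calF^\omega$ to find $F^\omega\in\calF^\omega$ with $\{x_0,\dots,x_t\}\subset F^\omega\s D^q_{\max}$, so that $L:=\Phi^q(F^\omega)\in\bff_q$ contains $A$, and pass to a further extension in $G$ by genericity (the set of conditions realizing such a covering is dense). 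The main obstacle I anticipate is the bookkeeping in iterating the Claim to capture all of $A$ simultaneously under a single $q$: each application of the Claim may enlarge the support and modify the data $(D_i,a_i)$, so I must check that realizing a new point $\xi_{j+1}$ does not destroy the already-arranged equalities $\Phi^q(x_i)=\xi_i$ for $i\le j$ — this requires arranging the inductive extensions so that previously fixed values, which live below the relevant $a_i^*$, are left untouched, exactly as the ``$x<a$'' branch of each $\phi_{a,\delta}$ and relation~\eqref{cohen.cap.eq1} are designed to ensure.
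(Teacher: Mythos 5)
Your overall strategy matches the paper's: properties (2)--(4) are transferred from $\calF^\omega$ through the maps $\Phi^p$ (which are strictly increasing, hence preserve cardinalities, intersections, initial segments and the increasing $\Delta$-system structure of canonical decompositions), using directedness of $G$ together with the exercise $q\leq p\Rightarrow\bff_p\subseteq\bff_q$ to place any finitely many members of $\calF$ inside a single $\bff_p$; and property (1) is a density argument riding on the preceding Claim. The paper compresses the transfer step to ``easy to check by contradiction,'' and for property (1), after obtaining $q$ and $B$ with $\Phi^q(B)=A$, it passes to a condition $q_0$ with $\supp(q_0)=\supp(q)$ and $q_0(\delta_i)=(F,a_i)$ for a single $F\in\calF^\omega$ containing $B\cup D_n$, checking $q_0\leq q$ via Lemma~\ref{construction.properties2}. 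Your explicit flagging of the iteration bookkeeping (that later applications of the Claim can move previously realized points) addresses a step the paper asserts without comment, and your proposed resolution is sound.

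One step of yours is wrong as written: you cannot ``use property~\eqref{omegaCS} to find $F^\omega\in\calF^\omega$ with $\{x_0,\dots,x_t\}\subset F^\omega\subseteq D^q_{\max}$.'' Property~\eqref{omegaCS} produces a set \emph{containing} a prescribed finite set inside its first component, with prescribed root; such a set is in general much larger than $D^q_{\max}$, and nothing guarantees the existence of a member of $\calF^\omega$ between $B$ and $D^q_{\max}$ other than $D^q_{\max}$ itself. Fortunately none is needed: since $B\subseteq\dom(\Phi^q)=D^q_{\max}$ and $D^q_{\max}\in\calF^\omega\restriction D^q_{\max}$, you may simply take $F^\omega=D^q_{\max}$, so that $\Phi^q(D^q_{\max})\in\bff_q$ already covers $A$ and $q$ itself forces the covering; alternatively, enlarge the condition as the paper does. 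With that repair your argument goes through and is, in substance, the paper's proof.
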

\begin{proof}
To see property (1) of a Construction Scheme let $A\subset\omega_1$ finite and $ p\in\PP$.
We can find $ q\leq p$ and $B\subset\omega$ finite such that $\Phi^q(B)=A$.
If $\supp( q)=\{\delta_0<\ldots<\delta_n\}$ and $ q(\delta_i)=(D_i,a_i)$, 
we can find $F\in\calF^\omega$ with $B\cup D_n\subset F$.
Define $ q_0\in\PP$ so that $\supp( q_0)=\supp( q)$ and $ q_0(\delta_i)=(F,a_i)$.
Using Lemma~\ref{construction.properties2}, it is easy to check $ q_0\leq q$. It is also clear that $ q_0$ forces \textit{``there exists $F\in\dot{\calF}$ with $A\subset F$''.}
This shows property $(1)$ of a Construction Scheme.

Properties (2)--(4) are easy to check by contradiction. 
\end{proof}

We have $\calF$ on $V[G]$ a construction scheme on $V[G]$.
To show $\calF$ is capturing, let $\dot{S}=(s_\alpha:\alpha<\omega_1)$ be an uncountable $\Delta$-System on $V[G]$.
Assume for simplicity $|s_\alpha|=1$ and $\dot{S}=(\{\xi\}:\xi\in\dot{\Gamma})$ where $\dot{\Gamma}$ is a name for an uncountable 
subset of $\omega_1$, the proof is the same for the general case.
Let $n<\omega$ be given.
Take $\Omega\subset\omega_1$ uncountable and $ p_\alpha\in\PP$ for $\alpha\in\Omega$ such that
\begin{equation}\label{cohen.cap.eq2}
 p_\alpha\Vdash\alpha\in\dot{\Gamma}
\end{equation}
We can assume without loss of generality that there is $\delta\in\supp( p_\alpha)$ such that 
$ p_\alpha(\delta)=(D,a)$, and $\alpha\in \phi_{a,\delta}(D)$. 

Find $\Omega_0\subset\Omega$ uncountable, $\delta_{\alpha,0}<\ldots<\delta_{\alpha,d-1}<\omega_1$ limit,
$D_i\in\calF^\omega_{k_i}$ for $i<d$, $a_0<\ldots<a_{d-1}$, and $x<\omega$ such that:
\begin{enumerate}
\item $(\supp( p_\alpha):\alpha\in\Omega_0)$ form a $\Delta$-System with root $\{\delta_{\alpha,0},\ldots,\delta_{\alpha,r-1}\}$,
\item $\supp( p_\alpha)=\{\delta_{\alpha,0},\ldots,\delta_{\alpha,d-1}\}$,
\item $ p_\alpha(\delta_{\alpha,i})=(D_i,a_i)$ for every $i<d$,
\item For $x\in D_{d-1}$ with $\Phi^{p_\alpha}(x)=\alpha$, there is fixed $j_0$ with: $j_0=d-1$ if $x\geq a_{d-1}$, or $j_0<d-1$ and is such that $a_{j_0}\leq x<a_{j_0+1}$.%$x\in D_{d-1}$ and $\Phi^{p_\alpha}(x)=\alpha$. 
%Take $j_0=d-1$ if $x\geq a_{d-1}$, or $j_0<d$ such that $a_{j_0}\leq x<a_{j_0+1}$.
\end{enumerate}

Pick $\alpha_0<\ldots<\alpha_{n-1}$ in $\Omega_0$.
We want to find $ q\in\PP$ such that 
\begin{equation}\label{cohen.cap.eq3}
 q\Vdash \alpha_i\in\dot{\Gamma}, \dot{\calF}\mbox{ captures } \alpha_0,\ldots,\alpha_{n-1}.
\end{equation}

Apply \eqref{omegaCS} to find $F^*\in\calF^\omega_k$ such that $k>k_{d-1}$,
$F^*=\bigcup_{i<n_k}F^*_i$ is the canonical decomposition of $F^*$,
$D_{d-1}\subset F^*_0$, and $R(F^*)=F^*_0\cap a_r$.

For $i<d$, note $a_i\in D_i\subset F^*_0$, 
therefore we can apply Lemma~\ref{construction.properties2} to find $W_{0,i}\in\calF^\omega_{k_i}$
with $W_{0,i}\cap a_i =D_i\cap a_i$ and $W_{0,i}\setminus a_i$ an interval of $F^*_0$ with $a_i\in W_{0,i}$.
Let $\varphi_i:F^*_0\rightarrow F_i^*$ be the increasing bijection between $F^*_0$ and $F_i^*$.
Define $W_{i,j}=\varphi_i(W_{0,j})$, and $a_{i,j}=\varphi_{D_j,W_{i,j}}(a_j)$ for $i<n$, $j<d$,
and $x_i=\varphi_{D_{j_0},W_{i,j_0}}(x)$ for $i<n$.

It is easy to check that
\begin{equation}\label{cohen.cap.eq4}
W_{i,j}\in\calF^\omega_{k_j},|W_{i,j}\cap a_{i,j}|=|D_j\cap a_j|,
\mbox{ and }W_{i,j}\setminus a_{i,j}\mbox{ is an interval of $F^*$ with $a_{i,j}\in W_{i,j}$}
\end{equation}
and by equation~\eqref{cohen.cap.eq1} we have
\begin{equation}\label{cohen.cap.eq5}
\phi_{a_{i,j_0},\delta_{\al_i,j_0}}(x_i)=\alpha_i
\end{equation}

We define $ q\in\PP$ with $\supp( q)=\{\delta_{\alpha_i,j}:i<n,j<d\}$.
Note now that $\delta_{\alpha,i}$ does not depend on $\alpha$ for $i<r$.
$$ q(\delta_{\alpha_i,j})=\begin{cases} (D_j,a_j) &\mbox{ for }j<r\\
                                      (F^*,a_{i,j}) &\mbox{ for }j\geq r
												  \end{cases}$$

With this definition, we have $\Phi^q(x_i)=\phi_{a_{i,j_0}},\delta_{\al_i,j_0}(x_i)=\alpha_i$ by~\eqref{cohen.cap.eq5},
and  $(W_{i,j}:r\leq j<d)$ is a witness to $ q\leq p_{\alpha_i}$ for every $i<n$, by~\eqref{cohen.cap.eq4}.
This implies $ q\Vdash \alpha_i\in\dot{\Gamma}$ for every $i<n$ because of~\eqref{cohen.cap.eq2}.

Finally, let $F=\Phi^q(F^*)$. Then $ q\Vdash F\in\dot{\calF}$, and 
by the construction of $(x_i:i<n)$ and~\eqref{cohen.cap.eq5} we have $ q$ forces $F$ captures $\alpha_0,\ldots,\alpha_{n-1}$.
Therefore~\eqref{cohen.cap.eq3} holds which is what we wanted to prove.

Suppose now $\kappa>\aleph_1$. Let $\CC_\kappa$ be the forcing for adding $\kappa$ Cohen reals.
It is well known (see for example Theorem 8.2.1 of Kunen~\cite{Kunen}) that 
$\CC_\kappa=\CC_{\omega_1}\ast\CC_{\kappa\setminus\omega_1}$.
We know that $\CC_{\omega_1}$ adds capturing construction schemes, 
by Lemma~\ref{lem1}, forcing with $\CC_{\kappa\setminus\omega_1}$ preserves capturing since it has precaliber $\aleph_1$.
Therefore forcing with $\CC_\kappa$ adds capturing construction schemes.

\end{proof}

\section{Fully Capturing and Capturing with Partitions}\label{extra}

There is a generalization of capturing that proves useful in some examples of \cite{todor}.
We present it here for completeness. 
\begin{defi}
Let $\calF$ be a construction scheme. We say that $\calF$ is \emph{fully capturing} if for every uncountable $\Delta$-system
$(s_\xi)_{\xi<\omega_1}$ of finite subsets of $\omega_1$ with root $s$, and every $k^*<\omega$ there are $F\in\calF_k$ with $k>k^*$,
 canonical decomposition $F=\bigcup_{i<n_k}F_i$, and $\xi_0<\ldots<\xi_{n_k-1}<\omega_1$, such that
\begin{align*}
                                                      &s                  \subset R(F)\\
   \mbox{for every $i<n_k$,} \quad  &                    s_{\xi_i}\setminus s  \subset F_i\setminus R(F), \\
 \mbox{for every $i<n_k$,} \quad     &      \varphi_i(s_{\xi_0})=s_{\xi_i}.
\end{align*}
\end{defi}

\begin{defi}
Let $\omega=\bigcup_{\ell<\omega}P_\ell$ be a partition of $\omega$ into infinite components and let $\vec{P}=(P_\ell:\ell<\omega)$.
Suppose $(m_k,n_k,r_k)$ forms a type  such that for every $\ell<\omega$, and every $r<\omega$ there are infinitely many $k$'s in $P_\ell$
with $r_k=r$. Then we say $(m_k,n_k,r_k)_k$ forms a $\vec{P}$-\emph{type}. 
\end{defi}
\begin{defi}
Let $\calF$ be a construction scheme with type $(m_k,n_k,r_k)_k$, and $2\geq n$.
We say $\calF$ is $n$-$\vec{P}$-\emph{capturing} if $(m_k,n_k,r_k)_k$ forms a $\vec{P}$-type, and for every uncountable $\Delta$-system
$(s_\xi)_{\xi<\omega_1}$ of finite subsets of $\omega_1$ with root $s$, and every $\ell<\omega$,
there are $\xi_0<\ldots<\xi_{n-1}<\omega_1$, $k\in P_\ell$ and
$F\in\calF_k$ with canonical decomposition $F=\bigcup_{i<n_k}F_i$, such that
\begin{align*}
&s                  \subset R(F)\\
   \mbox{for every $i<n$,} \quad              &         s_{\xi_i}\setminus s \subset F_i\setminus R(F), \\
 \mbox{for every $i<n$,} \quad         &  \varphi_i(s_{\xi_0})=s_{\xi_i}.
\end{align*}

We say $\calF$ is $\vec{P}$-\emph{capturing} if $\calF$ is $n$-$\vec{P}$-capturing for every $n<\omega$.
\end{defi}

We can also define $\vec{P}$-\emph{fully capturing} in the obvious manner.
What makes this version interesting is that it allows for different amalgamations.
For example, the existence of a 2-$\vec{P}$-capturing construction scheme implies there are Suslin trees and T-gaps.
This can be shown following \cite{LT} where the same objects are build using 3-capturing.

We prove the following Theorem about the consistency of other forms of capturing.

\begin{theo}
Adding $\kappa\geq \aleph_1$ Cohen reals implies there are Fully capturing construction schemes, $\vec{P}$-capturing consctruction schemes, and $\vec{P}$-fully capturing construction schemes. 
\end{theo}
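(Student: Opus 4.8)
The plan is to re-run the construction of Theorem~\ref{ch5.cohen} almost verbatim, modifying only the choice of the auxiliary set $F^*$ and the number of indices we capture. Throughout I work first with $\kappa=\aleph_1$, using the forcing $\PP\cong\CC_{\omega_1}$ and the generic scheme $\calF=\bigcup_{p\in G}\bff_p$; the case $\kappa>\aleph_1$ is then handled exactly as before, writing $\CC_\kappa=\CC_{\omega_1}\ast\CC_{\kappa\setminus\omega_1}$ and invoking preservation of each capturing notion under the precaliber $\aleph_1$ tail forcing. Those preservation statements are proved by the argument of Lemma~\ref{lem1}: given a name for a $\Delta$-system, choose conditions forcing the relevant memberships, pass by ccc to an uncountable ground-model $\Delta$-system, refine to an uncountable centered subfamily (precaliber $\aleph_1$), apply the corresponding capturing property in $V$, and take a common lower bound; for the fully-capturing variants one needs a lower bound of the $n_k$ relevant conditions, which centeredness supplies at once.

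The first ingredient is to fix, in the ground model, a construction scheme $\calF^\omega$ on $\omega$ whose type is the prescribed type (a $\vec P$-type, for the $\vec P$-versions) and which satisfies the following strengthening of \eqref{omegaCS}: for every finite $A\subset\omega$, every $a<\omega$, every $k^*<\omega$ and every $\ell<\omega$ there is $F\in\calF^\omega_k$ with $k>k^*$ and $k\in P_\ell$, canonical decomposition $\bigcup_{i<n_k}F_i$, such that $A\subset F_0$ and $R(F)=F_0\cap a$. Such an $\calF^\omega$ exists by the same recursive construction that yields \eqref{omegaCS}; the only new demand is that the level $k$ of the witnessing set be taken large and inside a prescribed piece $P_\ell$ of the partition, which is possible precisely because a $\vec P$-type has, for each $\ell$ and each value of $r$, infinitely many levels $k\in P_\ell$ with $r_k=r$. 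For the plain fully-capturing conclusion one drops the clause $k\in P_\ell$ and uses an arbitrary type.

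With this $\calF^\omega$ fixed, the capturing argument of Theorem~\ref{ch5.cohen} goes through with two changes. Given an uncountable $\Delta$-system $\dot S$, the prescribed $k^*$ and (for the $\vec P$-versions) the prescribed $\ell$, I first perform the same refinement to obtain $\Omega_0$ and the associated data $D_i,a_i,\delta_{\alpha,i}$. I then apply the strengthened \eqref{omegaCS} to produce $F^*\in\calF^\omega_k$ with $k>\max(k^*,k_{d-1})$ and $k\in P_\ell$, $D_{d-1}\subset F^*_0$ and $R(F^*)=F^*_0\cap a_r$; this simultaneously secures the level constraint $k>k^*$ and the membership $k\in P_\ell$. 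The second change is that, having now fixed $k$ and hence $n_k$, for the fully-capturing conclusions I select $n_k$ indices $\alpha_0<\dots<\alpha_{n_k-1}$ from the uncountable set $\Omega_0$ and distribute them over all $n_k$ pieces $F^*_0,\dots,F^*_{n_k-1}$ using the order isomorphisms $\varphi_i\colon F^*_0\to F^*_i$, building the witnesses $W_{i,j}$ exactly as in \eqref{cohen.cap.eq4}; for the non-full $\vec P$-version I keep only the first $n$ columns. The condition $q$ defined from these data forces, as before, that each $\alpha_i\in\dot\Gamma$ and that $F=\Phi^q(F^*)\in\dot\calF$ captures the chosen indices, at a level in $P_\ell$ above $k^*$, as required.

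The hard part will be the first ingredient: verifying that $\calF^\omega$ can be chosen to satisfy the strengthened \eqref{omegaCS} with simultaneous control of the size of the level and of its membership in $P_\ell$. Once this structural fact about $\omega$-construction schemes is in hand, the forcing side is a routine bookkeeping modification of Theorem~\ref{ch5.cohen}: passing from $n$ to $n_k$ columns requires no new idea, only more copies, and the verifications that $q\in\PP$ and that the $W_{i,j}$ witness $q\le p_{\alpha_i}$ are identical to the original. Finally, since a $\vec P$-type is in particular a type and $n_k\ge n$, a $\vec P$-fully capturing scheme is automatically fully capturing and $\vec P$-capturing, so all three conclusions may be read off from the single strongest construction.
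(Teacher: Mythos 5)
Your proposal is correct and takes essentially the same route as the paper: the paper likewise fixes a ground-model scheme $\calF^\omega$ on $\omega$ satisfying the $\vec{P}$-version of \eqref{omegaCS}, re-runs the capturing argument of Theorem~\ref{ch5.cohen} choosing $F^*$ at a level $k\in P_\ell$ with $k>k^*$ and capturing $n_k$-many indices $\alpha_0<\ldots<\alpha_{n_k-1}$, and reads off all three notions from the $\vec{P}$-fully capturing case. The only differences are cosmetic: you spell out the $k>k^*$ strengthening, the precaliber-$\aleph_1$ preservation step for $\kappa>\aleph_1$, and the implications among the three capturing notions, all of which the paper treats as immediate or leaves implicit.
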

\begin{proof}
The proof is similar to Theorem~\ref{ch5.cohen} therefore we only give a sketch for a 
$\vec{P}$-fully capturing construction scheme.

Let $\vec{P}$ be a partition of $\omega$ and let $(m_k,n_k,r_k)_{k<\omega}$ be a given $\vec{P}$-type.

It is easy to see, using the fact that $(m_k,n_k,r_k)_{k<\omega}$ is a $\vec{P}$-type, 
that there is a Construction Scheme $\calF^\omega$ on $\omega$ such that:
\begin{equation}\label{CS}
\mbox{\begin{minipage}{.9\textwidth}
For every $\ell<\omega$, $A\subset\omega$ finite, and $a<\omega$, there is $k\in P_\ell$ and $F\in\calF_k$ with canonical decomposition
$\bigcup_{i<n_k}F_i$, such that $A\subset F_0$ and $R(F)=F_0\cap a$.
\end{minipage}}
\end{equation}

Suppose now $\dot{\calF}$ is defined as in Theorem~\ref{ch5.cohen} 
and 
$\dot{S}=(s_\alpha:\alpha<\omega_1)$ is an uncountable $\Delta$-System on $V[G]$.
We assume that $|s_\alpha|=1$ and $\dot{S}=(\{\xi\}:\xi\in\dot{\Gamma})$ where $\dot{\Gamma}$ is a name for an uncountable 
subset of $\omega_1$, the argument is the same for the general case.
Let $\ell<\omega$ and $k^*<\omega$ be given.

Find $\Omega\subset\omega_1$ uncountable and $ p_\alpha\in\PP$ for $\alpha\in\Omega$ such that
\begin{equation}\label{fully.1}
 p_\alpha\Vdash\alpha\in\dot{\Gamma}
\end{equation}
And there is $\delta\in\supp( p_\alpha)$ such that 
$ p_\alpha(\delta)=(D,a)$, and $\alpha\in\phi_{a,\delta}(D)$. 
And $\delta_{\alpha,0}<\ldots<\delta_{\alpha,d-1}<\omega_1$ limit,
$D_i\in\calF^\omega_{k_i}$ for $i<d$, $a_0<\ldots<a_{d-1}$, and $x<\omega$ such that:
\begin{enumerate}
\item $(\supp( p_\alpha):\alpha\in\Omega_0)$ form a $\Delta$-System with root $\{\delta_{\alpha,0},\ldots,\delta_{\alpha,r-1}\}$,
\item $\supp( p_\alpha)=\{\delta_{\alpha,0},\ldots,\delta_{\alpha,d-1}\}$,
\item $ p_\alpha(\delta_{\alpha,i})=(D_i,a_i)$ for every $i<d$,
\item For $x\in D_{d-1}$ with $\Phi^{p_\alpha}(x)=\alpha$, there is fixed $j_0$ with: $j_0=d-1$ if $x\geq a_{d-1}$, or $j_0<d-1$ and is such that $a_{j_0}\leq x<a_{j_0+1}$.
\end{enumerate}

Apply \eqref{CS} to find $k\in P_\ell$ with $k>k^*$, and $F^*\in\calF^\omega_k$ such that $k>k_{d-1}$,
$F^*=\bigcup_{i<n_k}F^*_i$ is the canonical decomposition of $F^*$,
$D_{d-1}\subset F^*_0$, and $R(F^*)=F^*_0\cap a_r$.

Pick arbitrary $\alpha_0<\ldots<\alpha_{n_k-1}$ in $\Omega$.
We construct $ q\in\PP$, such that 
\begin{equation}\label{fully.2}
 q\Vdash \alpha_i\in\dot{\Gamma}, \exists F\in\dot{\calF}_k\mbox{ captures } \alpha_0,\ldots,\alpha_{n_k-1}.
\end{equation}

For $i<d$, note $a_i\in D_i\subset F^*_0$, 
therefore we can apply Lemma~\ref{construction.properties2} to find $W_{0,i}\in\calF^\omega_{k_i}$
with $W_{0,i}\cap a_i =D_i\cap a_i$ and $W_{0,i}\setminus a_i$ an interval of $F^*_0$ with $a_i\in W_{0,i}$.
Let $\varphi_i:F^*_0\rightarrow F_i^*$ be the increasing bijection between $F^*_0$ and $F_i^*$.
Define $W_{i,j}=\varphi_i(W_{0,j})$, and $a_{i,j}=\varphi_{D_j,W_{i,j}}(a_j)$ for $i<n$, $j<d$,
and $x_i=\varphi_{D_{j_0},W_{i,j_0}}(x)$ for $i<n_k$.

It is easy to check that
\begin{equation}\label{fully.3}
W_{i,j}\in\calF^\omega_{k_j},|W_{i,j}\cap a_{i,j}|=|D_j\cap a_j|,
\mbox{ and }W_{i,j}\setminus a_{i,j}\mbox{ is an interval of $F^*$ with $a_{i,j}\in W_{i,j}$}
\end{equation}
and as before we have
\begin{equation}\label{fully.4}
\phi_{a_{i,j_0},\delta_{\al_i,j_0}}(x_i)=\alpha_i
\end{equation}

We define $ q\in\PP$ with $\supp( q)=\{\delta_{\alpha_i,j}:i<n,j<d\}$.
Note now that $\delta_{\alpha,i}$ does not depend on $\alpha$ for $i<r$.
$$ q(\delta_{\alpha_i,j})=\begin{cases} (D_j,a_j) &\mbox{ for }j<r\\
                                      (F^*,a_{i,j}) &\mbox{ for }j\geq r
												  \end{cases}$$

With this definition, we have $\Phi^q(x_i)=\phi_{a_{i,j_0}},\delta_{\al_i,j_0}(x_i)=\alpha_i$ by~\eqref{cohen.cap.eq5},
and  $(W_{i,j}:r\leq j<d)$ is a witness to $ q\leq p_{\alpha_i}$ for every $i<n_k$, by~\eqref{fully.3}.
This implies $ q\Vdash \alpha_i\in\dot{\Gamma}$ for every $i<n_k$ because of~\eqref{fully.1}.

Finally, let $F=\Phi^q(F^*)$. Then $ q\Vdash F\in\dot{\calF}$, and 
by the construction of $(x_i:i<n_k)$ and~\eqref{fully.4} we have $ q$ forces $F$ captures $\alpha_0,\ldots,\alpha_{n_k-1}$.
Therefore~\eqref{fully.2} holds which is what we wanted to prove.

\end{proof}

We also have the following results related to the consistency of $n$-$\vec{P}$-capturing.
The proof follows the arguments in Section~\ref{ncap.nkna} and it is left to the reader.
\begin{theo}
Let $\vec{P}$ be a partition of $\omega$ as above.
Then $n$-$\vec{P}$-capturing and MA$_{\omega_1}($K$_m$) are independent if $n\leq m$ and they are incompatible if $n>m$.
Also $\vec{P}$-capturing, $\vec{P}$-fully capturing, and fully capturing are all independent of MA$_{\omega_1}($precaliber $\aleph_1$).
\end{theo}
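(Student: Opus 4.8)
The plan is to reprise the arguments of Section~\ref{ncap.nkna}, replacing each appeal to ordinary capturing by the appropriate $\vec P$- or fully-capturing variant; three observations make the transfer routine. First, each new property refines ordinary capturing of the relevant degree: an $n$-$\vec P$-capturing scheme is in particular $n$-capturing (fix any $\ell<\omega$ and forget that the capturing level is required to lie in $P_\ell$), while $\vec P$-capturing, fully capturing, and $\vec P$-fully capturing each imply $2$-capturing (for $\vec P$-capturing this is the case $n=2$; for the fully-capturing variants, capturing all $n_k\geq 2$ pieces captures the first two). Second, I will use Theorem~\ref{theorem3} and Todor\v{c}evi\'{c}'s theorem $\mathrm{MA}_{\omega_1}(K_m)\Rightarrow(\bigstar)_m$ verbatim. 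Third, by the preceding theorem, adding $\aleph_1$ Cohen reals yields $\vec P$-capturing, fully capturing, and $\vec P$-fully capturing schemes; performing this over a model of CH gives a model of CH carrying all three, since $\CC_{\omega_1}$ preserves CH. Note finally that CH refutes both $\mathrm{MA}_{\omega_1}(K_m)$ and $\mathrm{MA}_{\omega_1}(\text{precaliber }\aleph_1)$, as each implies $\gothp>\aleph_1$.

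The incompatibility for $n>m$ is then immediate: if $V\models\mathrm{MA}_{\omega_1}(K_m)$ then $(\bigstar)_m$ holds, whereas an $n$-$\vec P$-capturing scheme with $n>m$ is in particular $(m+1)$-capturing and so, by Theorem~\ref{theorem3}, makes $(\bigstar)_m$ fail---a contradiction. For the two directions of the independence statements I use two standard models. A model of $\mathrm{MA}_{\omega_1}$ satisfies $\mathrm{MA}_{\omega_1}(K_m)$ and $\mathrm{MA}_{\omega_1}(\text{precaliber }\aleph_1)$ and has no $n$-capturing scheme for any $2\leq n<\omega$ (Proposition~\ref{2cap} for $n=2$ and \cite{LT} for $n>2$), hence carries none of the four refined schemes: this gives the ``axiom holds, capturing fails'' direction. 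Conversely the CH model of the previous paragraph carries all the refined schemes yet fails both axioms: this gives the ``capturing holds, axiom fails'' direction. Together these establish independence for $n\leq m$ and the precaliber statement.

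To complete the picture as in Section~\ref{ncap.nkna}, I would first prove the $\vec P$-analogue of the Preservation Lemma~\ref{lem1}: $n$-$\vec P$-capturing is preserved by $K_n$ forcing, while $\vec P$-capturing, fully capturing, and $\vec P$-fully capturing are preserved by precaliber-$\aleph_1$ forcing. The proof copies that of Lemma~\ref{lem1}: given a $\PP$-name $\dot\Gamma$ for an uncountable subset of $\omega_1$ and conditions $p_\alpha\Vdash\alpha\in\dot\Gamma$, pass to an uncountable linked (resp.\ centered) subfamily, apply the ground-model capturing property to the corresponding indices---now demanding that the capturing level lie in $P_\ell$, resp.\ that all $n_k$ pieces be captured at a level above the prescribed $k^*$---and take a lower bound of the finitely many conditions involved. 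Granting this, forcing $\mathrm{MA}_{\omega_1}(K_m)$ with a $K_m$ poset over a model carrying a $\vec P$-capturing scheme leaves the scheme $m$-$\vec P$-capturing, hence $n$-$\vec P$-capturing for $n\leq m$, and forcing $\mathrm{MA}_{\omega_1}(\text{precaliber }\aleph_1)$ with a precaliber-$\aleph_1$ poset (as in the proof of Theorem~\ref{ncap}) preserves all the fully-capturing variants; this yields the compatibility of each axiom with the corresponding capturing property.

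The one substantive point---and the main obstacle---is to match the degree of amalgamation to the capturing demand. An $n$-$\vec P$-capturing instance requires a lower bound of only $n$ conditions, so $n$-linkedness (i.e.\ $K_n$) suffices; but fully capturing and $\vec P$-fully capturing amalgamate all $n_k$ pieces with $n_k$ unbounded, so no fixed $K_n$ can preserve them and one genuinely needs uncountable centered subfamilies, that is, precaliber $\aleph_1$. This is exactly what forces the theorem to pair the fully-capturing variants with the precaliber axiom rather than with $K_m$, and it is the only place where the $\vec P$/fully bookkeeping differs in substance from Section~\ref{ncap.nkna}.
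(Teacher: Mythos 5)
Your proposal is correct and is essentially the proof the paper intends: the paper explicitly leaves this theorem to the reader as a rerun of the Section~\ref{ncap.nkna} arguments, which is exactly what you carry out (the $\vec P$/fully analogues of Lemma~\ref{lem1}, Theorem~\ref{theorem3} plus Todor\v{c}evi\'{c}'s $(\bigstar)_m$ result for incompatibility, the MA$_{\omega_1}$ model and the Cohen-real model from Section~\ref{extra} for the two independence directions). Your closing observation---that the unboundedness of $n_k$ is what forces the fully-capturing variants to be paired with precaliber $\aleph_1$ rather than any fixed K$_m$---is precisely the right bookkeeping point and matches how the paper states the theorem.
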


It is clear that $n$-$\vec{P}$-capturing implies $n$-capturing and $\vec{P}$-capturing implies capturing,
however we do not know if any of the implications can be reversed.
Analogously, fully capturing implies capturing but we do not know if it is consistent to have capturing without fully capturing.

\end{document}